\newcommand{\ud}[1]{\,\mathrm{d}#1}
\providecommand{\keywords}[1]{\small\\\textbf{\textit{Keywords:}} #1}
\newtheorem{theorem}{Theorem}
\newtheorem{definition}{Definition}
\newtheorem{lemma}{Lemma}
\newtheorem{remark}{Remark}
\definecolor{blue}{rgb}{0, 0, 0}
\title{Topology optimization including a model of the {\color{blue}layer-by-layer} additive manufacturing process}
\author[1]{G.A.~Haveroth}       %Haveroth, Geovane Augusto
\author[2]{C.-J.~Thore}         %Thore, Carl-Johan;
\author[4]{M.R.~Correa}         %Correa, Maicon Ribeiro;
\author[1]{R.F.~Ausas}          %Ausas, Roberto Federico;
\author[3]{S.~Jakobsson}        %Jakobsson, Stefan; 
\author[1]{J.A.~Cuminato}       %Cuminato, José Alberto;
\author[2]{A.~Klarbring}        %Klarbring, Anders;
\affil[1]{Department of Applied Mathematics and Statistics, Institute of Mathematics and Computer Sciences, University of São Paulo, São Carlos-SP, Brazil}
\affil[2]{Division of Solid Mechanics, Department of Management and Engineering, Institute of Technology, Linköping University, 581 83 Linköping, Sweden}
\affil[3]{Arcam EBM, Mölndal, Sweden}
\affil[4]{Department of Applied Mathematics, Institute of Mathematics, Statistics and Scientific Computing, University of Campinas, Campinas-SP, Brazil}
\date{\today}
\begin{document}
\maketitle

\begin{abstract}
A topology optimization formulation including a model of the layer-by-layer additive manufacturing (AM) process is considered. Defined as a multi-objective minimization problem, the formulation accounts for the performance and cost of both the final and partially manufactured designs and allows for considering AM-related issues such as overhang and residual stresses in the optimization. 
The formulation is exemplified by stiffness optimization in which the overhang is limited by adding mechanical or thermal compliance as a measure of the cost of partially manufactured designs. 
Convergence of the model as the approximate layer-by-layer model is refined is shown theoretically, and an extensive numerical study indicates that this convergence can be fast, thus making it a computationally viable approach useful for including AM-related issues into topology optimization. The examples also show that drips and sharp corners associated with some geometry-based formulations for overhang limitation can be avoided. The codes used in this article are written in Python using only open sources libraries and are available for reference.
\noindent \keywords{Topology optimization; Additive manufacturing; Layer-by-Layer process; Thermal conductivity; Self-weight; Stiffness.}
\end{abstract}

\section{Introduction}
\label{sec:introduction}

Mechanical and structural engineers always strive to make efficient use of materials, e.g., by developing lighter structures with the same load-carrying capacity, thus bringing economical and environmental benefits. The search for more efficient structures was originally based on a trial-and-error process. However, in the last decades, computational tools based on optimization theory have been developed to find optimal structures semi-automatically. In particular, Topology Optimization (TO) \cite{bendsoe2003topology} 
has gained a lot of interest for its ability to generate highly efficient designs that are superior to those developed by traditional trial-and-error approaches. However, optimality often comes at the cost of geometric complexity, implying that topology optimized designs may be impossible to manufacture using traditional methods. Therefore, researchers and users of TO have become interested in additive manufacturing (AM) as a way to realize complex TO designs.
{\color{blue} In particular, the powder bed fusion (PBF), the  electron beam melting (EBM) and the selective laser melting (SLM), technologies where a focused energy beam (electron or laser) selectively melts the fine metal powder, have attracted interest.}

While offering great freedom with regards to which designs can be manufactured, all AM processes have limitations and characteristics -- minimal printable sizes, allowable material deposition angle (maximum overhang angle), bridging distance, heat transfer accommodation \cite{leary2014optimal, pellens2019combined, ameen2019self} and so on -- which should be accounted for in the design process to ensure high-quality builds, thus motivating research into TO formulations aimed specifically at AM.

In Much work on TO for AM has focused on avoiding or limiting overhangs. Two main approaches to this problem can be identified: (i) support structures are optimized in a post-processing step for a given design \cite{mezzadri2018topology, cheng2019utilizing}; and (ii) overhang constraints are included in the TO to ensure manufacturability without support \cite{Langelaar:2017, qian2017undercut, allaire2017structural, langelaar2018combined, garaigordobil2019overhang, thore2019penalty, mezzadri2020second, vdVen:2021}. The second approach is the most attractive, ensuring simultaneously optimality and manufacturability. The proposed methods for overhang control in this approach can be divided into geometry- and physics-based. Geometry-based approaches are computationally cheap but may lead to inefficient structures that are not self-supported or exhibits drips and sharp corners which are not optimal and may lead to high stresses \cite{qian2017undercut,thore2019penalty}. In physics-based approaches, one tries to mimic the layer-by-layer build process. This is also the case in some geometry-based approaches \cite{Langelaar:2017}, but physics-based approaches also include an explicit model of the physics
\cite{allaire2017structural,allaire2018optimizing,amir2018topology,wang2020space}. Depending on the complexity of the physics model this can be more computationally costly than geometry-based approaches but may also lead to designs that perform better and are more suitable for manufacturing. We also emphasize that having a physical model of the layer-by-layer process allows for consideration of, for example, path planning or residual stresses
\cite{Allaire:2018,miki2021topology} that develop during the build-processes and may lead to part distortion and worsened strength and fatigue properties.

In this article, we present a general TO formulation that incorporates ideas from the physical AM layer-by-layer building process. Defined as a multi-objective minimization problem, which we refer to as the \textit{archetype problem} below, the formulation accounts for the performance and cost of both the final and partially manufactured designs and allows for considering AM-related issues such as overhang and residual stresses in the optimization. The formulation is exemplified by -- but certainly not limited to! -- stiffness optimization in which the overhang is controlled by adding mechanical or thermal compliance as a measure of the cost of partially manufactured designs.
{\color{blue}
In the proposed model of the building process, we solve separate boundary values problems (BVPs) for each partially built structure and one for the final design.
The BVPs have separate load cases, as detailed in Section \ref{sec:archetype_problem}.}

The remainder of this article is organized as follows: Section \ref{sec:classical_to_formulation} presents the standard density-based TO formulation. Section \ref{sec:archetype_problem} introduces the archetype TO problem together with two specializations based on self-weight and thermal conductivity.
Section \ref{sec:num_results} presents extensive numerical studies in 2D and 3D using the proposed formulations. Finally, conclusions are given in Section \ref{sec:conclusion}.

\section{Standard density-based TO formulation}
\label{sec:classical_to_formulation}

Let the design domain $\Omega \subset \mathbb{R}^d$ ($d = 2,3$) be an open, bounded and connected set with Lipschitz continuous boundary $\Gamma$ divided into two disjoint parts $\Gamma_D$, $|\Gamma_D|>0$, and $\Gamma_N$ where the body is subject to Dirichlet and Neumann conditions, respectively.
{\color{blue}
For simplicity and without loss of generality, homogeneous Dirichlet boundary conditions are considered throughout the paper. In all the text, the vector $\bm{x}$ designates the coordinate of a point in $\Omega$.}
We want to determine the domain of an isotropic elastic material, the so-called design (-solid) domain, $\Omega^S \subset \Omega$, that satisfies desirable features described by appropriate constraints.
The standard formulation of density-based TO for stiffness maximization in continuous variational form reads:
\begin{equation}
    \begin{array}{cll}
        \displaystyle \min_{\substack{ \rho \in L^\infty \left( \Omega, [0,1] \right)\\ \bm{u} \in V }} & 
        a({\rho};\bm{u},\bm{u}) & \textmd{[compliance $=J_D$]} \\
        \textmd{s.t.} 
& a({\rho};\bm{u},\bm{v}) = L(\bm{v}) \quad \forall \bm{v} \in V & \textmd{[balance equation]}\\
& \displaystyle \int_{\Omega} \bar{\rho} \left( \rho \right) \, \mathrm{d}x \leq \bar{v} \int_{\Omega} \, \mathrm{d}x & \textmd{[volume constraint]} \\
    \end{array}
    \label{eq:standad_formulation_TO}
\end{equation}
% with trial and test spaces equally defined as
where
\begin{equation*}
    V = \left\{ \bm{v} \in {\left[ H^1(\Omega) \right]}^d \ \vert \ \bm{v} = \bm{0} \ \textmd{on} \ \Gamma_D \right\} .
\end{equation*}

\noindent
The compliance $J_D$ is the cost function, $\bar{v}$ is the fraction of solid material allowed in $\Omega$, and $\rho$ is the optimization variable or design field prior to the filtering and threshold projection procedures (described below); that is,
\begin{eqnarray*}
{\color{blue}
    \rho (\bm{x}) \overset{\mathrm{filtering}}{\longrightarrow} 
    \hat{\rho} = \hat{\rho}(\rho)(\bm{x})
    \overset{\mathrm{threshold}}{\longrightarrow} 
    \bar{\rho} = \bar{\rho} 
    \left(  
    \hat{\rho}(\bm{x})
    \right), \quad \bm{x} \in \Omega.}
\end{eqnarray*}
The bilinear and linear forms are defined respectively by
\begin{equation}
    a({\rho}; \bm{u}, \bm{v}) = \int_{\Omega} \bm{\sigma} ({\rho}, \bm{u}) : \bm{\varepsilon} (\bm{v}) \, \mathrm{d}x
    \quad
    \textmd{and}
    \quad
    L(\bm{v}) = \int_{\Gamma_N} \bm{t} \cdot \bm{v} \, \mathrm{d}s,
    \label{eq:varastandard}
\end{equation}

\noindent
where $\bm{\varepsilon} (\bm{v}) = \nabla^S \bm{v}$ is the symmetric gradient tensor, $\bm{\sigma} (\rho, \bm{u})$ is the stress tensor {\color{blue}given by}
\begin{eqnarray}
    \bm{\sigma}({\rho}, \bm{u}) 
    &=& \lambda \textmd{tr} \left( \bm{\varepsilon} (\bm{u}) \right) \bm{I} + 2 \mu \bm{\varepsilon} (\bm{u}) \nonumber \\
    &=& \mathbb{D}( \lambda, \mu ) : \bm{\varepsilon} (\bm{u}),
    \label{eq:stress_tensor_expression}
\end{eqnarray}
and $\bm{t}$ is a given traction vector acting on the Neumann boundary $\Gamma_N$.
Herein, $\mathbb{D}$ is the symmetric fourth-order elasticity tensor with components $\mathbb{D}_{ijkl}=\lambda \delta_{ij}\delta_{kl}+\mu\left( \delta_{ik}\delta_{jl}+\delta_{il}\delta_{jk} \right)$.
The Lam\'e elasticity material parameters, $\lambda = \lambda(E({\rho}), \nu)$ and $\mu = \mu(E({\rho}), \nu)$, are interpolated in $\Omega$ according to the solid isotropic material with penalization (SIMP) \cite{bendsoe1989optimal} scheme as
\begin{equation*}
    E(\bar{\rho}\left( \rho \right)) = E_{\min} + \bar{\rho}^q \left( E_0 - E_{\min} \right) ,
    \label{eq:SIMP_interpolation}
\end{equation*}
where $E$ and $\nu$ are the Young's modulus and Poisson ratio, respectively.
{\color{blue}
In addition, $E_{\min}$ and $E_0$ represent the Young's minimum (void) and full-material values, respectively, with $0 < E_{\min} < E_{0}$ and $q$ is an appropriate penalty factor.}
Let us recall that the SIMP method is not well
suited for TO when self-weight are included due to parasitic eﬀects arising at low densities \cite{bruyneel2005note}. An alternative scheme, that avoids such pathology, is the rational approximation of material properties (RAMP) \cite{stolpe2001alternative}, in which the Young's modulus is replaced by
\begin{equation*}
    E(\bar{\rho}\left( \rho \right)) = E_{\min} + \frac{\bar{\rho}}{1+q(1-\bar{\rho})} \left( E_0 - E_{\min} \right) .
    \label{eq:RAMP_interpolation}
\end{equation*}
Hereafter, we use the SIMP strategy unless stated otherwise;
When it is the case, the RAMP scheme is identified by 
{\color{blue} the superscription ${\left( \cdot \right)}^\mathrm{RAMP}$.}

\begin{remark}
The standard density-based TO formulation \eqref{eq:standad_formulation_TO} can be conveniently redefined into a problem of the form known as the nested form in the TO-literature.
It is obtained by using the balance equation to express $\bm{u}$ as a function of $\rho$, leading to the problem
\begin{equation*}
    \displaystyle \min_{ \rho \in \mathcal{D}} L(\bm{u}(\rho)), \\
    \label{eq:standad_formulation_TO_nested_form}
\end{equation*}
where the set of admissible designs $\mathcal{D}$ comprise those designs satisfying the box ($\rho\in[0,1]$ a.e.) and volume constraints in \eqref{eq:standad_formulation_TO}, and $\bm{u}(\rho)$ is the unique solution to the state problem for a given design $\rho \in \mathcal{D}$.
\label{remark:nested_form}
\end{remark}

The next section describes the filtering and threshold projection procedures on the design variable.
Although they are well addressed in the literature, here we summarize the key ingredients concerning concepts and solution methods that we consider useful to be presented in the same notation and context.

\subsection{Helmholtz-type PDE filtering and threshold projection}
\label{sec:helmholtz_threshold}

The filtered design field $\hat{\rho}$ is represented implicitly by the solution of the isotropic Helmholtz-type partial differential equation (PDE) \cite{lazarov2011filters}:
\begin{equation}
    \begin{array}{ll}
        -r^2 \nabla^2 \hat{\rho} + \hat{\rho} = \rho & \textmd{in} \ \Omega,  \\
        \nabla \hat{\rho} \cdot \bm{n} = 0 & \textmd{on} \ \Gamma, 
    \end{array}
    \label{eq:isotropic_helmholtz_pde_filtering}
\end{equation}
\noindent
where $\bm{n}$ is the outward normal of $\Omega$. Here $r$ is related to the length parameter that represents the usual density filtering radius size $\bar{r}$, by $r = \bar{r}/(2 \sqrt{3})$. Although not considered in this work, boundary effects can be eliminated by adopting the procedure described in \cite{clausen2017filter, wallin2020consistent}.

The variational formulation of \eqref{eq:isotropic_helmholtz_pde_filtering} is stated as: find $\hat{\rho} \in H^1(\Omega)$ such that
\begin{equation}
    a(\hat{\rho}, \eta) = L(\eta) \quad \forall \eta \in H^1(\Omega),
    \label{eq:variational_formulation_FEM}
\end{equation}

\noindent
with
\begin{equation}
    a(\hat{\rho}, \eta) 
    = \int_{\Omega} r^2 \nabla \hat{\rho} \cdot \nabla \eta \, \mathrm{d}x
    + \int_{\Omega} \hat{\rho} \eta \, \mathrm{d}x
    \quad \textmd{and} \quad
    L(\eta) = \int_{\Omega} \rho \eta \, \mathrm{d}x.
    \label{eq:varfhelmholtz}
\end{equation}
Formulation \eqref{eq:variational_formulation_FEM} has a unique solution $\hat{\rho} = \hat{\rho}(\rho)$ for every $\rho \in L^\infty(\Omega)$, guaranteed by the Lax-Milgram lemma, and $\hat{\rho}\in[0,1]$ for every admissible $\rho$, as a consequence of a maximum principle (c.f.~\cite[Proposition 9.30]{brezis2011functional}).
From the discrete point of view, however, e.g. when \eqref{eq:variational_formulation_FEM} is solved by the classical Galerkin Finite Element Method (FEM) based on continuous Lagrangian elements, the approximate solution for $\hat{\rho}$ may lie outside the range $[0,1]$, unless the mesh is fine enough. 
In our implementation, this issue is handled by applying $\min-\max$ operators when 
such a situation is detected. 
Finally, if discontinuous approximations for $\hat{\rho}$ are sought, alternative formulations may be used, such as finite volume methods \cite{eymard2000finite}, mixed methods \cite{brezzi2005mixed} or discontinuous Galerkin methods \cite{cockburn2012discontinuous, brezzi2000discontinuous, Massing2017DG}.

An important and immediate result concerning the pointwise convergence of the filtered design used in the next section is (see appendix \ref{sec:app_theorem1} for a proof):
\begin{theorem}
\label{thm:pointwisefilter} 
Let $(\rho_{n})$, $\rho_{n} \in L^{\infty}(\Omega,[0,1])$, be a sequence converging weakly$^*$ to some $\rho  \in L^{\infty}(\Omega,[0,1])$. Then the sequence of filtered designs $(\hat{\rho}(\rho_{n}))$ %
originated by the solutions of  \eqref{eq:variational_formulation_FEM}-\eqref{eq:varfhelmholtz} converges pointwise a.e.\ to $\hat{\rho}(\rho)$.
\end{theorem}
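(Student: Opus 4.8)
The plan is to exploit the fact that the Helmholtz-type filter of \eqref{eq:variational_formulation_FEM}--\eqref{eq:varfhelmholtz} is a linear integral operator whose kernel is, for each fixed argument, a probability density; this turns the statement into an almost immediate consequence of the very definition of weak$^*$ convergence. Throughout, write $\hat{\rho}_n := \hat{\rho}(\rho_n)$ and $\hat{\rho} := \hat{\rho}(\rho)$.

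The first step is to record the Green's-function representation of the filter. Since $-r^2\nabla^2 + 1$ with homogeneous Neumann data is a coercive, self-adjoint, second-order elliptic operator on the bounded Lipschitz domain $\Omega$ with constant coefficients, classical elliptic theory furnishes a symmetric Green's function $G(\bm{x},\bm{y})$ such that $\hat{\rho}(\rho)(\bm{x}) = \int_{\Omega} G(\bm{x},\bm{y})\,\rho(\bm{y})\,\mathrm{d}y$ for a.e.\ $\bm{x}\in\Omega$ and every $\rho\in L^{\infty}(\Omega)$. By the maximum principle already invoked for the filter (cf.\ \cite[Proposition 9.30]{brezis2011functional}) the kernel is nonnegative, and testing the filter equation with $\rho\equiv 1$ gives $\hat{\rho}\equiv 1$, i.e.\ $\int_{\Omega} G(\bm{x},\bm{y})\,\mathrm{d}y = 1$ for every $\bm{x}$. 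Hence $G(\bm{x},\cdot)$ is a probability density; in particular $G(\bm{x},\cdot)\in L^{1}(\Omega)$ with $\|G(\bm{x},\cdot)\|_{L^{1}(\Omega)} = 1$.

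The conclusion is then one line. Fix $\bm{x}\in\Omega$. Since $G(\bm{x},\cdot)\in L^{1}(\Omega)$ and $L^{\infty}(\Omega)$ is the dual of $L^{1}(\Omega)$, the weak$^*$ convergence $\rho_n \to \rho$ means exactly that $\int_{\Omega} G(\bm{x},\bm{y})\,\rho_n(\bm{y})\,\mathrm{d}y \to \int_{\Omega} G(\bm{x},\bm{y})\,\rho(\bm{y})\,\mathrm{d}y$, that is, $\hat{\rho}_n(\bm{x})\to\hat{\rho}(\bm{x})$. Since this holds for a.e.\ $\bm{x}\in\Omega$, the claim follows.

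The main obstacle is purely the PDE-theoretic input of the second step: the existence, symmetry and a.e.-validity of the representation by the Neumann Green's function on a mere Lipschitz domain. Interior estimates of the classical type (e.g.\ the local bound $G(\bm{x},\bm{y})\le C|\bm{x}-\bm{y}|^{2-d}$ for $d=3$, logarithmic for $d=2$) are standard, the coefficients being constant, and since we only need the representation for a.e.\ $\bm{x}$ the boundary behaviour is immaterial. Should one wish to bypass Green's functions entirely, an alternative is to note that $\hat{\rho}_n\in[0,1]$ by the maximum principle, so the energy identity $a(\hat{\rho}_n,\hat{\rho}_n) = \int_{\Omega}\rho_n\hat{\rho}_n\,\mathrm{d}x$ gives a uniform $H^{1}(\Omega)$-bound; any weakly convergent subsequence has limit equal to $\hat{\rho}(\rho)$, obtained by passing to the limit in \eqref{eq:variational_formulation_FEM} (each test function $\eta\in H^{1}(\Omega)\subset L^{1}(\Omega)$, and $\rho_n\to\rho$ weakly$^*$) and invoking Lax--Milgram uniqueness, so $\hat{\rho}_n\rightharpoonup\hat{\rho}$ in $H^{1}(\Omega)$ along the full sequence; Rellich--Kondrachov then yields strong $L^{2}(\Omega)$ convergence, and a uniform interior H\"older estimate (De Giorgi--Nash--Moser, immediate here) together with Arzel\`a--Ascoli upgrades this to local uniform, hence pointwise a.e., convergence. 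I expect the Green's-function route to be the cleanest to write down.
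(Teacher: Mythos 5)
Your proof is correct, but it takes a genuinely different route from the paper's. The paper argues by soft functional analysis: for an arbitrary subsequence it derives a uniform $H^{1}(\Omega)$ bound from coercivity, extracts a weakly convergent further subsequence whose limit is identified with $\hat{\rho}(\rho)$ by uniqueness, upgrades to strong $L^{2}(\Omega)$ convergence via Rellich--Kondrachov, and then extracts yet another subsequence converging pointwise a.e.\ (\cite[Theorem 4.9]{brezis2011functional}), concluding from the arbitrariness of the initial subsequence. Your primary route replaces all of this by a single duality observation: once the filter is written as integration against the Neumann Green's function $G(\bm{x},\cdot)$, which is a nonnegative $L^{1}$ kernel of unit mass, weak$^{*}$ convergence in $L^{\infty}=(L^{1})^{*}$ tested against $G(\bm{x},\cdot)$ \emph{is} the statement $\hat{\rho}_{n}(\bm{x})\to\hat{\rho}(\bm{x})$. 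This buys something real: pointwise a.e.\ convergence is not a metrizable (indeed not a topological) mode of convergence, so the ``every subsequence has an a.e.-convergent further subsequence'' principle invoked at the end of the paper's proof is delicate (compare the typewriter sequence, which converges to zero in $L^{1}$ yet nowhere pointwise), whereas your argument gives convergence of the \emph{full} sequence outside a single null set with no subsequence extraction at all. The price is the PDE input you correctly flag---existence of the Neumann Green's function and a.e.\ validity of the representation on a Lipschitz domain---which is standard (ultracontractivity of the Neumann heat semigroup via the extension property) but deserves a citation; note also that the exceptional null set of the representation may depend on the density, so one should take the union of the countably many null sets attached to $\rho$ and the $\rho_{n}$. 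Your fallback argument coincides with the paper's up through strong $L^{2}$ convergence, but its final step (interior elliptic regularity plus Arzel\`a--Ascoli, giving locally uniform convergence) again sidesteps the a.e.-subsequence issue and is equally sound.
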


Notice that the density field should exhibit, ideally, binary values in $\{0, 1\}$. 
In our implementation,
intermediate densities are reduced by adopting a threshold projection that provides density fields with a steeper transition between $0$ and $1$.
In such cases,  the $\hat{\rho}$ values are projected to their maximum and minimum limits $\{0, 1\}$.
According to \cite{wang2011projection} this can
be accomplished by computing a new field $\bar{\rho}$ defined as:
\begin{equation*}
    \bar{\rho} (\hat{\rho}(\bm{x})) 
    = \frac{\tanh{\left( \beta \gamma \right)}
    +\tanh{\left( \beta(\hat{\rho}(\bm{x})-\gamma) \right)}}
    {\tanh{\left( \beta \gamma \right)}
    +\tanh{\left( \beta(1-\gamma) \right)}},
    \label{eq:threshold_projection}
\end{equation*}
where $\beta$ is the projection parameter responsible for weighting the intermediate density values and $\gamma$ translates the projection. Herein, we assume an increasing value of $\beta$ from $\beta_{\min}$ to $\beta_{\max}$, doubling it every $\beta_{d}$ iterations.

\section{General framework for TO with AM}
\label{sec:archetype_problem}

The standard density-based TO formulation \eqref{eq:standad_formulation_TO} is now extended to account for the physics of the AM process,
leading to a quite general formulation for TO aimed at AM which we refer to as the archetype problem. This is done by including additional terms in the objective function which takes into account properties of partially built structures. The general framework is then specialized in Sections \ref{sec:self_weight_based_formulation} and \ref{sec:thermal_conductivity_based_formulation} to include the mechanical and thermal compliances of such partially built structures as a way of controlling overhang.
{\color{blue}
Contrary to many methods developed in the literature \cite{qian2017undercut, allaire2018optimizing, thore2019penalty, mezzadri2020second}, the proposed formulation limits the overhang relative to the build plate without imposing a critical overhang angle.}

We consider an AM process occurring on the time interval $(0, T]$, where $T$ is the total process time. The domain and boundaries of interest of a partially built structure at time $t \in (0,T]$, are given by
\begin{equation*}
    \Omega_{t} = 
    \left\{ 
    \bm{x} \in \Omega \ \vert \ 0 < \bm{x} \cdot \bm{b} < H t/T
    \right\},
    \quad
    \Gamma_{t}^{u} = 
    \Gamma_t
    \setminus
    \Gamma
    \quad \textmd{and} \quad
    \Gamma_{0} = 
    \left\{ 
    \bm{x} \in \Gamma \ \vert \ \bm{x} \cdot \bm{b} = 0
    \right\},
\end{equation*}
where $\bm{b}$ is a unit vector that defines the build direction normal to the build plate, $H=\max \left( \bm{x} \cdot \bm{b} \right)$, $\bm{x} \in \Omega$, is the ``height'' of the design domain along the build direction, 
{\color{blue}
$\Gamma_t$ is the boundary of $\Omega_{t}$,
and $\Gamma_0$ represents the base (build plate).
The product $Ht/T$ in the definition of $\Omega_t$ refers only to a parameterization of the height $H$ as $t$ increases.}
In the AM process, the part $\Omega_t^S$ of $\Omega^S$ that has been created up to time $t$ is defined by ${\left. \rho \right|}_{\Omega_t}\left( \bm{x} \right) = 1$ (see Figure \ref{fig:archetype_problem_new2}), where ${\left. \rho \right|}_{\Omega_t}\left( \bm{x} \right) = {\left. \rho \left( \bm{x} \right) \right|}_{\bm{x} \in \Omega_t}$.
Note that the density field $\rho$ is simply $\rho = {\left. \rho \right|}_{\Omega_T}$.

\begin{figure}[H]
\centering
\includegraphics[scale=0.85]{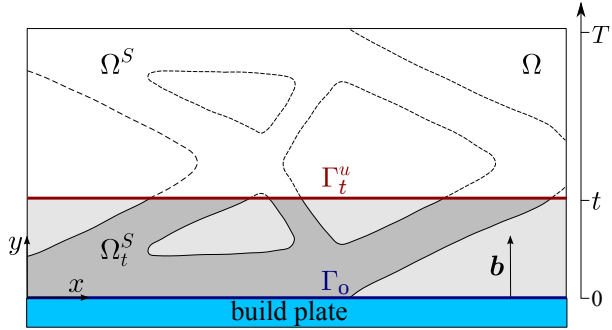}
\caption{Schematic illustration of the AM process.}
\label{fig:archetype_problem_new2}
\end{figure}

Denoting by $\mathcal{D} \subset L^{\infty}(\Omega)$ the set of admissible designs defined by the box  and volume constraints in \eqref{eq:standad_formulation_TO}, the archetype problem is defined as
\begin{equation}
    \begin{array}{ll}
      \left( \mathbb{AP} \right) 
      & \displaystyle \min_{\rho \in \mathcal{D}}
      \left[
        a \left( \rho; \bm{u}(\rho), \bm{u}(\rho) \right) 
        + \displaystyle \int_0^T \gamma(t) J_P \left( {\left. \rho \right|}_{\Omega_t}, \psi_P \left({\left. \rho \right|}_{\Omega_t}, t \right), t \right) \, \mathrm{d}t
        \right],
    \end{array}
    \label{eq:AP_problem}
\end{equation}
\noindent
where $\bm{u}\left( \rho \right)$ solves the state problem in \eqref{eq:standad_formulation_TO}.
The total cost function contains a cost for the design and a cost $J_P$ for each partially manufactured design $\Omega^S_t$. 
The latter may depend on temperature, displacement, or velocity fields, denoted collectively by $\psi_P = \psi_P \left( {\left. \rho \right|}_{\Omega_t}, t \right)$ which is the solution to a state problem posed on the domain $\Omega_{t}$ for a given design. 
A suitable, non-restrictive technical assumption is that $\psi_P \left({\left. \rho \right|}_{\Omega_t}, t \right) \in H \left(\Omega_t, \mathbb{R}^n \right)$ where $H \left( \Omega_t, \mathbb{R}^n \right)$ is a Banach space with norm ${\left\| \cdot \right\|}_{H(\Omega_{t})}$.

The non-negative (bounded and continuous) function $\gamma$ in \eqref{eq:AP_problem} can be used to balance the costs for the final and partially manufactured designs. With appropriately chosen costs, an optimal solution to this multi-objective problem defines a Pareto optimum \cite[Proposition 3, p.~297]{Aubin:1979}. Although not addressed in this work, a possible alternative formulation of this multi-objective problem is to convert some of the objective terms into constraints.

To solve problem \eqref{eq:AP_problem} numerically, {\color{blue} we must discretize the process in time and space.}
Towards this end, let $0 = t_0 < t_1 < \cdots < t_l = T$, be a partition of the interval $(0, T]$ into subintervals $I_i=(t_{i-1},t_{i}]$, $i=1, 2, \cdots, l$, such that
\begin{equation*}
    \lim_{l \to \infty}
    \left(
    \max_{i=1,2,\cdots, l} 
    |I_i|
    \right) = 0.
\end{equation*}
where $l$ is the number of discrete layers used to mimic the build process.
Figure \ref{fig:process_domain_boundaries_new2} illustrates the domains and boundaries that arise at the discrete time steps. By applying 
%the backward Euler method, 
a first-order backward approximation to the time integral, 
the semi-discrete version of the $\mathbb{AP}$ becomes
\begin{equation}
      \displaystyle \min_{\rho \in \mathcal{D}}
      \left[
        a \left( \rho; \bm{u}(\rho), \bm{u}(\rho) \right)
        + \sum_{i=1}^l w_i \,
        J_P \left( {\left. \rho \right|}_{\Omega_i}, \psi_P \left( {\left. \rho \right|}_{\Omega_i}, t_i \right), t_i \right) \right],
    \label{eq:AP_problem_new_semi_discrete}
\end{equation}
where $w_i = (t_i-t_{i-1}) \gamma(t_i)$ and $\Omega_i = \Omega_{t_i}$ for $i=1,2,\cdots, l$.
Discretization is completed by spatial discretization using the  Finite Element Method (FEM).

\begin{remark}
As noted by \cite{allaire2017structural, amir2018topology}, the time steps do not necessarily represent just one physical AM layer. Additionally, it is not possible to see how the material is being deposited along the i-th layer. Although not addressed in this study, such limitation may be overcome by adding contributions concerning the partially built structures along with the perpendicular direction to  $\bm{b}$.
\end{remark}

\begin{figure}[H]
\centering
\includegraphics[scale=0.85]{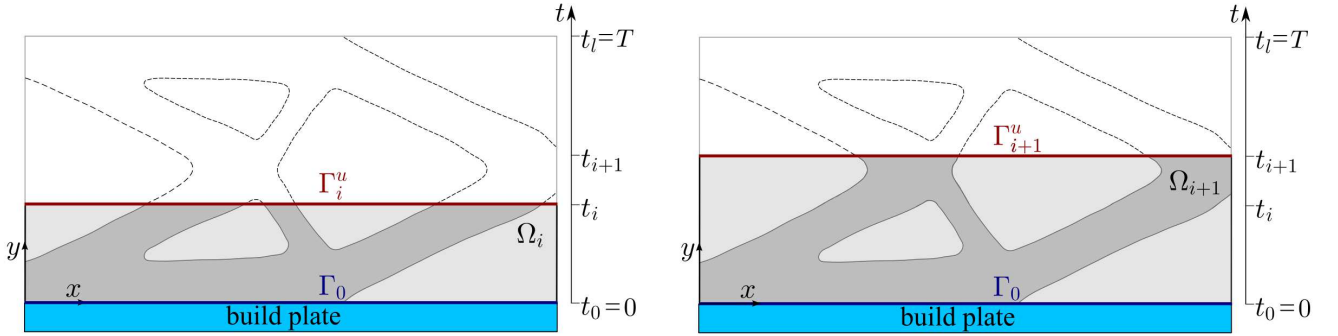}
\caption{Domains defined along the discrete process and its boundaries.}
\label{fig:process_domain_boundaries_new2}
\end{figure}

An important aspect that deserves attention is related to the convergence of solutions to problem \eqref{eq:AP_problem_new_semi_discrete} as the number of layers increases. 
According to Theorem \ref{thm:abstractconvergence} below, any such sequence will converge to a solution to problem \eqref{eq:AP_problem} (whose existence is an outcome of the theorem). In the statement of the theorem and its proof we compare functions defined on different domains and therefore we introduce the following definition of weak convergence (c.f \cite{haslinger1996finite}):

\begin{definition} 
Consider a sequence $\left( t_{i} \right)$ converging to some $t$. 
A sequence of functions $\left( v_{i} \right)$ such that $v_{i} \in H \left( \Omega_{i},\mathbb{R}^{n} \right)$ for all $i$ is said to converge weakly (indicated by $\rightharpoonup$) to a function $v \in H \left( \Omega_{t},\mathbb{R}^{n} \right)$ if $\left( \tilde{v}_{i} \right)$ converges weakly in $H \left( \Omega,\mathbb{R}^{n} \right)$ to $\tilde{v} \in H \left( \Omega,\mathbb{R}^{n} \right)$, where the $\tilde{v}$ denotes the extension of a function $v \in H \left( \Omega_{t},\mathbb{R}^{n} \right)$ to $H \left( \Omega,\mathbb{R}^{n} \right)$.
\end{definition}

\noindent Whether a function in $H \left( \Omega_{t},\mathbb{R}^{n} \right)$ can be extended to a function in $H \left(\Omega,\mathbb{R}^{n} \right)$ depends on properties of the domain in general; for the standard Sobolev spaces ${\left[ H^{1}\left( \omega \right) \right]}^{d}$ this is true for example for domains with Lipschitz boundary \cite{jones1981quasiconformal}.

\begin{theorem}
\label{thm:abstractconvergence}
Assume that \eqref{eq:AP_problem_new_semi_discrete} has at least one solution for every $l \geq 1$ and that every sequence of admissible designs $(\rho_{i}) \overset{\ast}{\rightharpoonup} \rho \in \mathcal{D}$ and times $(t_{i}) \to t$ has a subsequence such that the corresponding states converges weakly, i.e.\ $\psi_P({\left. \rho_{i} \right|}_{\Omega_{{i}}}, t_{i})  \rightharpoonup \psi_P ({\left. \rho \right|}_{\Omega_t}, t)$,
and satisfy $||\psi_P ( {\left. \rho_l \right|}_{\Omega_{{i}}}, t_{i} )||_{H(\Omega_{{i}})} \leq C$ with $C$ independent on $\Omega_{{i}}$. 
Furthermore, assume that for a sequence of the just described type, $J_{P}$ is continuous in the sense that $J_{P}(\rho_{i}, \psi_P ({\left. \rho_{i} \right|}_{\Omega_{{i}}} , t_{i} ), t_{i} ) \rightarrow J_{P} (\rho, \psi_P ({\left. \rho \right|}_{\Omega_t}, t ), t )$ point-wise a.e.\ in $(0,T]$. 
Finally assume that $J_{P}$ satisfies, with $c$ a positive constant,  $\left| J_{P}(\rho,\psi_P({\left. \rho \right|}_{\Omega_t}, t), t) \right|
\leq c
{\left\| \psi_P({\left. \rho \right|}_{\Omega_t}, t)\right\|}_{H(\Omega_{t})}$ for every $t \in (0,T]$. Then every sequence of solutions to \eqref{eq:AP_problem_new_semi_discrete} has a subsequence converging weakly$^*$ to a solution to \eqref{eq:AP_problem} as $l \rightarrow \infty$.
\label{teo:convergence}
\end{theorem}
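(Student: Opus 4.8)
The plan is the direct method of the calculus of variations coupled with a $\Gamma$-convergence-type comparison between the semi-discrete objective $F_{l}(\rho)=a(\rho;\bm{u}(\rho),\bm{u}(\rho))+\sum_{i=1}^{l}w_{i}\,J_{P}({\left.\rho\right|}_{\Omega_{i}},\psi_{P}({\left.\rho\right|}_{\Omega_{i}},t_{i}),t_{i})$ of \eqref{eq:AP_problem_new_semi_discrete} and the limiting objective $F(\rho)$ of \eqref{eq:AP_problem}. For each $l$ I would fix a solution $\rho_{l}$ of \eqref{eq:AP_problem_new_semi_discrete}; since $\mathcal{D}$ is a bounded subset of $L^{\infty}(\Omega)$ and $L^{1}(\Omega)$ is separable, Banach--Alaoglu extracts a subsequence (not relabelled) with $\rho_{l}\overset{\ast}{\rightharpoonup}\rho^{\ast}$. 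To see $\rho^{\ast}\in\mathcal{D}$: the box constraint passes to weak$^{\ast}$ limits by testing against non-negative $L^{1}$ functions, and, using Theorem \ref{thm:pointwisefilter} and continuity of the threshold map, $\bar{\rho}(\rho_{l})\to\bar{\rho}(\rho^{\ast})$ a.e.\ and boundedly, so dominated convergence preserves the volume inequality. The goal is then to prove $F(\rho^{\ast})\leq F(\rho)$ for every $\rho\in\mathcal{D}$, which simultaneously yields the existence of a minimizer of \eqref{eq:AP_problem} and the asserted convergence of solutions.

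Next I would record continuity of the compliance term along $\rho_{l}\overset{\ast}{\rightharpoonup}\rho^{\ast}$. By Theorem \ref{thm:pointwisefilter} we have $\hat{\rho}(\rho_{l})\to\hat{\rho}(\rho^{\ast})$ a.e., hence the elasticity coefficients $E(\bar{\rho}(\rho_{l}))$ converge a.e.\ and, being confined to $[E_{\min},E_{0}]$, boundedly; a standard continuity argument (uniform coercivity of $a(\rho_{l};\cdot,\cdot)$ on $V$ since $E\geq E_{\min}>0$, together with dominated convergence applied to $(\mathbb{D}(\rho^{\ast})-\mathbb{D}(\rho_{l})):\bm{\varepsilon}(\bm{u}(\rho^{\ast}))$ in $L^{2}$) then gives $\bm{u}(\rho_{l})\to\bm{u}(\rho^{\ast})$ in $V$, and therefore $a(\rho_{l};\bm{u}(\rho_{l}),\bm{u}(\rho_{l}))=L(\bm{u}(\rho_{l}))\to L(\bm{u}(\rho^{\ast}))=a(\rho^{\ast};\bm{u}(\rho^{\ast}),\bm{u}(\rho^{\ast}))$; the same holds trivially for a constant sequence $\rho_{l}\equiv\rho$.

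For the process term I would rewrite the sum as a time integral: for a design $\rho$ and the level-$l$ partition set $\gamma_{l}(t)=\gamma(t_{i})$ and $g_{l}^{\rho}(t)=J_{P}({\left.\rho\right|}_{\Omega_{i}},\psi_{P}({\left.\rho\right|}_{\Omega_{i}},t_{i}),t_{i})$ for $t\in I_{i}$, so that $\sum_{i=1}^{l}w_{i}J_{P}(\cdots)=\int_{0}^{T}\gamma_{l}(t)\,g_{l}^{\rho}(t)\,\mathrm{d}t$. The heart of the proof is the claim that whenever $\rho_{l}\overset{\ast}{\rightharpoonup}\rho$ in $\mathcal{D}$ — the constant sequence included — one has $\int_{0}^{T}\gamma_{l}\,g_{l}^{\rho_{l}}\,\mathrm{d}t\to\int_{0}^{T}\gamma(t)\,J_{P}({\left.\rho\right|}_{\Omega_{t}},\psi_{P}({\left.\rho\right|}_{\Omega_{t}},t),t)\,\mathrm{d}t$. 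Granting this, the proof concludes quickly: applied to the minimizers it combines with the preceding paragraph to give $F_{l}(\rho_{l})\to F(\rho^{\ast})$; applied to the constant sequence it gives $F_{l}(\rho)\to F(\rho)$; and minimality $F_{l}(\rho_{l})\leq F_{l}(\rho)$ then forces $F(\rho^{\ast})\leq F(\rho)$.

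The remaining task — the step I expect to be the main obstacle — is the claim above. I would fix $t\in(0,T]$, let $i=i(l,t)$ be the index with $t\in I_{i}$, and note $t_{i(l,t)}\to t$ because $\max_{j}|I_{j}|\to0$. The sequences $(\rho_{l})$ and $(t_{i(l,t)})$ then satisfy the hypotheses of the theorem, as does every subsequence, so every subsequence of $(l)$ admits a further subsequence along which the states converge weakly to $\psi_{P}({\left.\rho\right|}_{\Omega_{t}},t)$; along it the assumed pointwise continuity of $J_{P}$ gives $g_{l}^{\rho_{l}}(t)\to g(t):=J_{P}({\left.\rho\right|}_{\Omega_{t}},\psi_{P}({\left.\rho\right|}_{\Omega_{t}},t),t)$, for a.e.\ $t$. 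Since $g(t)$ is a fixed real number, the subsequence principle in $\mathbb{R}$ upgrades this to $g_{l}^{\rho_{l}}(t)\to g(t)$ for the whole sequence and a.e.\ $t\in(0,T]$. Finally $|g_{l}^{\rho_{l}}(t)|\leq c\,\|\psi_{P}\|_{H(\Omega_{i})}\leq cC$ with $C$ independent of the domain, and $\gamma$ is bounded with $\gamma_{l}\to\gamma$ pointwise, so $\gamma_{l}g_{l}^{\rho_{l}}$ is dominated by an integrable constant on $(0,T]$ and the bounded convergence theorem yields the claim. The delicacy is exactly here: the state convergence granted by the hypothesis is only subsequential and lives on varying domains, so one cannot integrate a genuine pointwise limit directly — routing the subsequence argument through the scalar integrands $g_{l}^{\rho_{l}}(t)$ rather than through the states, and using domain-independence of $C$ for the majorant, is what makes the bounded convergence theorem applicable; the rest is bookkeeping.
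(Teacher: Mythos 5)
Your proposal is correct and follows essentially the same route as the paper's proof: extract a weakly$^*$ convergent subsequence of minimizers, pass to the limit in the compliance term via Theorem \ref{thm:pointwisefilter}, rewrite the layer sum as a time integral of a piecewise-constant integrand, and conclude with dominated convergence using the domain-independent bound $c\,C$ before passing to the limit in the minimality inequality. The one point where your write-up is cleaner is the treatment of the pointwise limit of the integrand: the paper extracts, for each fixed $\tau$, a subsequence along which the states converge weakly and then asserts a.e.\ convergence of the integrand, leaving implicit that this extraction depends on $\tau$; your observation that every subsequence of the real sequence $g_{l}^{\rho_{l}}(t)$ admits a further subsequence converging to the \emph{same} limit $g(t)$ — so that the full scalar sequence converges at each admissible $t$ — removes any $t$-dependent subsequence from the dominated-convergence step and is exactly the right way to make that part airtight.
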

\begin{proof}
Consider a sequence of solutions $\left(\rho_l^*\right)$ to the semi-discretized problem \eqref{eq:AP_problem_new_semi_discrete}. 
By definition of a minimizer there holds that
\begin{multline}
        a \left( \rho_l^*; \bm{u} \left( \rho_l^* \right), \bm{u} \left( \rho_l^* \right) \right)
        + \displaystyle \int_0^T \sum_{i=1}^{l} \gamma \left( t_{i} \right) J_P \left( {\left. \rho_l^* \right|}_{\Omega_i}, \psi_P \left({\left. \rho_l^* \right|}_{\Omega_{i}}, t_i \right), t_i \right) \chi_{i}(t) \, \mathrm{d}t \leq \\
        a \left( \rho; \bm{u}\left(\rho\right), \bm{u}\left(\rho\right)\right)
        + \displaystyle \int_0^T \sum_{i=1}^{l} \gamma \left( t_{i} \right) J_P \left( {\left. \rho \right|}_{\Omega_{{i}}}, \psi_P \left( {\left. \rho \right|}_{\Omega_{{i}}}, t_i \right), t_i \right) \chi_{i} \left( t \right) \, \mathrm{d}t \quad \forall \rho \in \mathcal{D},
\label{eq:defminimizer}
\end{multline}
where the indicator function $\chi_{i} \left( t \right) = 1$ if $t \in I_{i}$ and $\chi_{i} \left( t \right) = 0$ otherwise. 
Since $\mathcal{D}$ is weakly$^*$ sequentially compact \cite{borrvall2001topology}, we may extract a subsequence, again denoted $\left( \rho_{l}^{*} \right)$, converging weakly$^*$ to some $\rho^* \in \mathcal{D}$. 
Since $a \left( \rho; \cdot, \cdot \right)$ is coercive for every $\rho \in \mathcal{D}$ and $L$ is continuous we deduce that ${\left\| \bm{u} \left( \rho_{l}^{*} \right) \right\|}_{1, \Omega} \leq C$ for all $l$ and some constant $C$.
The sequence of states is thus bounded and we may extract a further subsequence $\left( \rho_{l_m}^{*} \right)$ of designs such that the corresponding sequence of states converges weakly (in ${[H^{1}(\Omega)]}^{d}$) to some $\bm{u} \in V$. Using the pointwise convergence of the filtered design (Theorem \ref{thm:pointwisefilter}) one can show (c.f.~e.g~\cite{petersson1999some}) that this $\bm{u}$ satisfies $a \left( \rho^*; \bm{u}, \bm{v} \right) = L \left( \bm{v} \right)$ for all $\bm{v} \in V$ , i.e.\ $\bm{u} = \bm{u}\left( \rho^* \right)$. 
We have thus obtained a subsequence of designs such that the first term on the left in \eqref{eq:defminimizer} converges to $a \left( \rho^*; \bm{u} \left(\rho^* \right), \bm{u} \left( \rho^* \right) \right)$.

To conclude the proof, we show that for any sequence $ \left( \rho_l \right)$ of admissible designs tending weakly$^*$ to some $\rho \in \mathcal{D}$ there is a subsequence such that 
\begin{equation}
    \displaystyle \int_0^T \sum_{i=1}^{l} \gamma \left( t_{i} \right) J_P \left( {\left. \rho_l \right|}_{\Omega_{i}}, \psi_P \left({\left. \rho_l \right|}_{\Omega_{{i}}}, t_{i} \right) ,t_{i} \right) \chi_{i}(t) \, \mathrm{d}t \rightarrow 
    \displaystyle \int_0^T \gamma \left( t \right) J_P 
    \left(
    {\left. \rho \right|}_{\Omega_t},
    \psi_P \left( {\left. \rho \right|}_{\Omega_t}, t  \right),
    t
    \right) \, \mathrm{d}t.
\label{eq:integralconvergence}
\end{equation}
Towards this end, let $\tau \in (0,T]$ be given. 
This $\tau$ belongs to some interval $I_{k}$, $k(l) \in \{1, \ldots, l\}$, so
\begin{multline*}
    \left|
    \sum_{i=1}^{l} \gamma \left( t_{i} \right)
    J_P \left(    
        {\left. \rho_l \right|}_{\Omega_{i}},
        \psi_P \left({\left. \rho_l \right|}_{\Omega_{{i}}}, t_{i} \right),
        t_{i}
        \right)
    \chi_{i}(\tau)
    - \gamma \left( t \right)
    J_P \left(
        {\left. \rho \right|}_{\Omega_t},
        \psi_P \left( {\left. \rho \right|}_{\Omega_t}, t \right),
        t
        \right)
    \right| = \\
    \left|
    \gamma \left( \tau_{k} \right)
    J_P \left(   
        {\left. \rho_l \right|}_{\Omega_{\tau_k}},
        \psi_P \left( {\left. \rho_l \right|}_{\Omega_{\tau_{k}}}, \tau_{k} \right),
        \tau_{k}
        \right)
    - \gamma \left( \tau \right)
    J_P \left(
        {\left. \rho \right|}_{\Omega_\tau},
        \psi_P \left( {\left. \rho \right|}_{\Omega_\tau}, \tau \right),
        \tau
        \right)
    \right|.
\end{multline*}
Since $\left| \tau_{k(l)}-\tau \right| \leq \max_{i=1,\ldots,l} \left|I_{i}\right| \to 0$ as $l \to \infty$, the sequence $(\tau_{k(l)})$ converges to $\tau$ as $l \to \infty$. By assumption we may extract a subsequence (not relabeled) of densities and times such that $\psi_P \left( {\left. \rho_l \right|}_{\Omega_{\tau_{k(l)}}}, \tau_{k(l)} \right)$ tends weakly to $\psi_P \left( {\left. \rho \right|}_{\Omega_{\tau}}, \tau \right)$. Then, from the assumed continuity of $\gamma$ and $J_{P}$ and the fact that $\tau \in (0,T]$ was arbitrary follows the point-wise convergence of the integrand on the left in \eqref{eq:integralconvergence} to that on the right. Finally, letting $c_{\gamma}$ denote the upper bound on $\gamma$ over $(0,T]$,
\begin{equation*}
    \left|
    \sum_{i=1}^{l} \gamma \left( t_{i} \right)
    J_P \left( 
        {\left. \rho_l \right|}_{\Omega_{i}}, 
        \psi_P \left({\left. \rho_l \right|}_{\Omega_{{i}}}, t_{i} \right),
        t_{i}
        \right)
    \chi_{i}(\tau)
    \right|
    \leq
    c_{\gamma} c 
    {\left\|
    \psi_P \left({\left. \rho_l \right|}_{\Omega_{\tau_{k(l)}}}, \tau_{k(l)} \right)
    \right\|}_{H \left( \Omega_{\tau_{k(l)}} \right)}
    \leq
c_{\gamma} c  C,
\end{equation*}
where the last inequality follows since $(\psi_P({\left. \rho_l \right|}_{\Omega_{\tau_{k(l)}}}, \tau_{k(l)} ))$ is a weakly convergent, hence bounded sequence. By assumption, $C$ is independent of $\Omega_{\tau_{k}}$, and since $c_{\gamma} c C$ is independent of $l$ and Lebesgue integrable on $(0,T]$, the convergence \eqref{eq:integralconvergence} follows from the Lebesgue dominated convergence theorem.

According to the preceding paragraph, we may now from $\left( \rho_{l_m}^{*} \right)$ extract a further subsequence such that the convergence \eqref{eq:integralconvergence} holds on both sides (with ``$\rho_l$'' = $\rho_l^*$ and ``$\rho_l$'' = $\rho$ for all $l$ respectively) in \eqref{eq:defminimizer}, and thus that in the limit,  
\begin{multline*}
        a \left( \rho^*; \bm{u} \left( \rho^* \right), \bm{u} \left( \rho^* \right) \right)
        + \displaystyle \int_0^T \gamma \left( t \right) J_P \left( {\left. \rho^* \right|}_{\Omega_t}, \psi_P \left({\left. \rho^* \right|}_{\Omega_{t}}, t \right), t \right) \, \mathrm{d}t \leq \\
        a \left( \rho; \bm{u}\left(\rho\right), \bm{u}\left(\rho\right)\right)
        + \displaystyle \int_0^T \gamma \left( t \right) J_P \left( {\left. \rho \right|}_{\Omega_t}, \psi_P \left( {\left. \rho \right|}_{\Omega_t}, t \right), t \right) \, \mathrm{d}t \quad \forall \rho \in \mathcal{D},
\end{multline*}
or in other words that $\rho^*$ solves \eqref{eq:AP_problem}.
\end{proof}

The fact \textit{every} sequence of solutions to \eqref{eq:AP_problem_new_semi_discrete} has a subsequence converging to a solution to \eqref{eq:AP_problem} as $l \rightarrow \infty$ means that every sequence converges to the set of solutions to \eqref{eq:AP_problem}. 
If \eqref{eq:AP_problem} has a unique solution, the entire sequence will thus converge to this solution. 
{\color{blue}Note also that, while} the designs only converge weakly$^*$, the associated filtered designs $\hat{\rho}$ converges point-wise (recall Theorem \ref{thm:pointwisefilter}).
An 
%interesting 
important aspect not addressed by Theorem \ref{thm:abstractconvergence} is the \textit{rate of} convergence. The numerical examples below suggest that it can be quite fast and it would be very interesting to obtain theoretical results in this direction. Appendix \ref{sec:app_a} provides an example of how to verify the assumptions of Theorem \ref{thm:abstractconvergence} in a practical case.

As previously stated, the cost $J_P$ of each partially manufactured design may be functions of $\psi_P$. 
In the following, we present two particularizations for the $\psi_P$ dependence based on self-weight and thermal conductivity physics.

\subsection{Self-weight-based formulation}
\label{sec:self_weight_based_formulation}

This section presents the self-weight-based formulation as the first alternative to reduce overhang regions.
In the total cost function given in \eqref{eq:AP_problem_new_semi_discrete}, the final design cost, $J_D$, is defined by the compliance of a structure subject to the fixed external loads, whereas the design cost, $J_P$, is represented by the compliance of partially manufactured structures subject to self-weight loading.

By considering the compliance-based TO \eqref{eq:standad_formulation_TO} and the discrete version of $\mathbb{AP}$ \eqref{eq:AP_problem_new_semi_discrete}, the self-weight-based formulation is stated as: 
Find a density scalar field $\rho \in \mathcal{D}$ and displacement fields $\bm{u} \in V$, $\bm{u}_i \in V_i$ for $i=1,2,\cdots, l$, such that
\begin{equation}
    \begin{array}{cll}
        \displaystyle 
        \min_{\rho \in \mathcal{D}}
        & a(\rho; \bm{u}, \bm{u}) + \displaystyle \sum_{i=1}^{l} w_i \ a_{i}(\rho; \bm{u}_i,\bm{u}_i) & \textmd{[total cost]} \\
        \textmd{s.t.} 
& a(\rho; \bm{u},\bm{v}) = L(\bm{v}) \quad \forall \bm{v} \in {V}  & \textmd{[balance equation: main problem]}\\
& a_i(\rho; \bm{u}_i,\bm{v}_i) = L_i(\rho; \bm{v}_i) \quad \forall \bm{v}_i \in {V}_{i}, \ i = 1, 2, \ldots, l & \textmd{[balance equations: sub-problems]}\\
    \end{array}
    \label{eq:self_weight_formulation_TO}
\end{equation}
with trial and test spaces $V_{i}$ equally defined as
\begin{equation*}
    V_{i} = \left\{ \bm{v}_i \in {\left[ H^1(\Omega_i) \right]}^d \ \vert \ \bm{v}_i = \bm{0} \ \textmd{on} \ \Gamma_{0} \right\} 
    \quad \textmd{for} \ i = 1, 2, \ldots, l
    .
\end{equation*}
The corresponding bilinear and linear forms are defined by \eqref{eq:varastandard} and
\begin{equation}
    a_i(\rho; \bm{u}, \bm{v}) = \int_{\Omega_i} \bm{\sigma}^\mathrm{RAMP} \left( {\left. \rho \right\vert}_{\Omega_i}, \bm{u} \right) : \bm{\varepsilon} (\bm{v}) \ud x
    \quad \textmd{and} \quad
    L_i(\rho; \bm{v}) = \int_{\Omega_i} \bm{f}\left( {\left. \rho \right\vert}_{\Omega_i} \right) \cdot \bm{v} \ud x 
    \quad \textmd{for} \ i = 1, 2, \ldots, l .
    \label{eq:bilinear_linear_form_subproblems_self_weight}
\end{equation}
\noindent
where $\bm{f}\left( {\left. \rho \right\vert}_{\Omega_i} \right) \in [L^{2}(\Omega_i)]^d$ is the body force per unit volume. As shown in Appendix \ref{sec:app_a}, this formulation verifies the assumptions of Theorem \ref{teo:convergence}.

{\color{blue}
\paragraph{Sensitivity analysis}
Let us derive the sensitivity of the objective function with respect to the physical field $\bar{\rho}$.
The sensitivity of the physical field $\bar{\rho}$, with respect to the optimization variable $\rho$, can be easily evaluated by differentiating the filtering equations and, therefore, will not be described here (see \cite{qian2013topological} for details).
The Lagrangian $L_{\textmd{sw}}$ for the self-weight-based objective function (\ref{eq:self_weight_formulation_TO}a) is defined as
\begin{equation*}
    L_{\textmd{sw}}(\rho, \bm{u}, \bm{u}_i, \bm{\lambda}, \bm{\mu}_i) := 
    a(\rho; \bm{u}, \bm{u}) 
    + \displaystyle \sum_{i=1}^{l} 
    w_i a_{i}(\rho; \bm{u}_i,\bm{u}_i) 
    + a(\rho; \bm{u}, \bm{\lambda}) 
    - L(\bm{\lambda}) + \displaystyle \sum_{i=1}^{l} \left[ a_i(\rho; \bm{u}_i,\bm{\mu}_i) - L_i(\rho; \bm{\mu}_i) \right],
\end{equation*}

\noindent
where $\bm{\lambda} \in V$ and $\bm{\mu}_i \in V_i$, $i=1, \cdots, l$, are the Lagrange multipliers associated with the balance equations (\ref{eq:self_weight_formulation_TO}b) and (\ref{eq:self_weight_formulation_TO}c), respectively.
Note that $L_{\textmd{sw}}$ has the same value as the objective function for all $\rho$ since the equations of state are satisfied; hence the derivatives of $L_{\textmd{sw}}$ and those of the objective function will coincide.

The Gâteaux derivative of $L_{\textmd{sw}}$ with respect to the physical field $\bar{\rho}$ evaluated by the adjoint sensitivity method is given by
\begin{eqnarray*}
    \frac{\ud L_{\textmd{sw}}}{\ud \bar{\rho}} 
    &=&
    \frac{\partial L_{\textmd{sw}}}{\partial \bar{\rho}}
    +
    \frac{\partial}{\partial \bm{u}}
    \left[
    a(\rho; \bm{u}, \bm{u})
    + a(\rho; \bm{u}, \bm{\lambda})
    - L(\bm{\lambda})
    \right]
    \cdot
    \dfrac{\ud \bm{u}}{\ud \bar{\rho}} 
    \nonumber
    \\
    && 
    \quad +
    \sum_{i=1}^{l}
    \frac{\partial}{\partial \bm{u}_i}
    \left[
    w_i a_{i}(\rho; \bm{u}_i,\bm{u}_i)
    + a_i(\rho; \bm{u}_i,\bm{\mu}_i) 
    - L_i(\rho; \bm{\mu}_i)
    \right]
    \cdot
    \dfrac{\ud \bm{u}_i}{\ud \bar{\rho}},
    \label{eq_Lsw_derivative}
\end{eqnarray*}
where we see that computation of the complicated terms ${\ud \bm{u}}/{\ud \bar{\rho}}$ and ${\ud \bm{u}_i}/{\ud \bar{\rho}}$ can be avoided if the multipliers are chosen as $\bm{\lambda}=-2\bm{u}$ and $\bm{\mu}_i=-2 w_i \bm{u}_i$, respectively. The derivative then reduces to
\begin{equation*}
    \frac{\ud L_{\textmd{sw}}}{\ud \bar{\rho}} =
    \frac{\partial}{\partial \bar{\rho}} \left( -a(\rho; \bm{u}, \bm{u}) + \displaystyle \sum_{i=1}^{l} w_i \left[ 2 L_i(\rho; \bm{u}_i) - a_{i}(\rho; \bm{u}_i,\bm{u}_i) \right] \right),
\end{equation*}

\noindent
where the partial derivatives are given, for each $i = 1, 2, \ldots, l$, by
\begin{equation}
    \label{eq:first_rhs_partial_derivative}
    \frac{\partial}{\partial \bar{\rho}} a(\rho; \bm{u}, \bm{u}) = 
    \int_{\Omega} 
    E' \left( \rho \right) \delta \bar{\rho} \ 
    \bm{\varepsilon} (\bm{u}) : \mathbb{D}_0( \nu ) : \bm{\varepsilon} (\bm{u}) \, \mathrm{d}x,
\end{equation}
\begin{equation*}
    \frac{\partial}{\partial \bar{\rho}} a_{i}(\rho; \bm{u}_i,\bm{u}_i) = 
    \int_{\Omega_i} 
    {E'}^\mathrm{RAMP} \left( {\left. \rho \right\vert}_{\Omega_i} \right) \delta \bar{\rho} \ \bm{\varepsilon} (\bm{u}_i) : \mathbb{D}_0( \nu ) : \bm{\varepsilon} (\bm{u}_i) \, \mathrm{d}x
    \quad \textmd{and} \quad
    \frac{\partial}{\partial \bar{\rho}} L_i(\rho; \bm{u}_i) = 
    \int_{\Omega_i} \bm{f}'\left( {\left. \rho \right\vert}_{\Omega_i} \right) \delta \bar{\rho} \cdot \bm{u}_i \ud x,
\end{equation*}

\noindent
with Young's modulus derivatives
\begin{equation*}
    E' \left( \bar{\rho}(\rho) \right) = q \bar{\rho}^{q-1} \left( E_0 - E_{\min} \right) 
    \quad \textmd{and} \quad
    {E'}^\mathrm{RAMP} \left( \bar{\rho}(\rho) \right) = \frac{\left(1+q \right)}{{\left[1+q(1-\bar{\rho})\right]}^2} \left( E_0 - E_{\min} \right),
\end{equation*}
and $\delta \bar{\rho}$ denoting the variation of the field $\bar{\rho}$.
For the sake of convenience of exposition, in the previous expressions we considered the stress tensor $\bm{\sigma}$ given in \eqref{eq:stress_tensor_expression} rewritten by using the identity $\mathbb{D} \left( \lambda, \mu \right) = E \left( \rho \right) \mathbb{D}_0( \nu )$ for an appropriate fourth-order tensor $\mathbb{D}_0$.}

\subsection{Thermal conductivity-based formulation}
\label{sec:thermal_conductivity_based_formulation}

The thermal conductivity-based formulation considers the temperature gradient between the build plate and the current manufactured layer, trying to mimic the heat deposition into the structure over the building process.
The design of each partially manufactured structure will contain channels that may be seen as supports for an optimal heat transfer. 
The relation between high thermal gradients and deformations has been treated in the literature.
In \cite{CHENG2015102}, the authors apply a thermomechanical model to overhang structures in the EBM process to evaluate the temperature induced deformation  and compare with experiments. They claim (page 103) that the heat load, more than the gravity, is the source of the deformation. This is also illustrated by printed parts where the warping is upwards and not downwards as would be predicted by self-weight. This is further developed in \cite{Cooper2018} where they pursue the idea of contact free supports that serve as heat sinks for the overhang structures. For laser printing the problem of residual stresses is well known. The paper \cite{Patterson2017} offers a review of the residual stress problem and its relation to thermal deformation for overhang structures for SLM.

Figure \ref{fig:motivation_heat_formulation} illustrates the temperature field over a generic partially built structure exhibiting an overhang region.
The localized high temperature on the partially built structure mimics a physical observation in the EBM process, which is induced by the melting of the material when a new layer is deposited. 
The difficulty in dissipating  such energy deposition may cause excessive deformations and warping defects, due to the residual thermal stresses.
The formulation of this section, based on thermal conductivity, aims to limit such behavior.

\begin{figure}[H]
     \centering
     \includegraphics[scale=0.85]{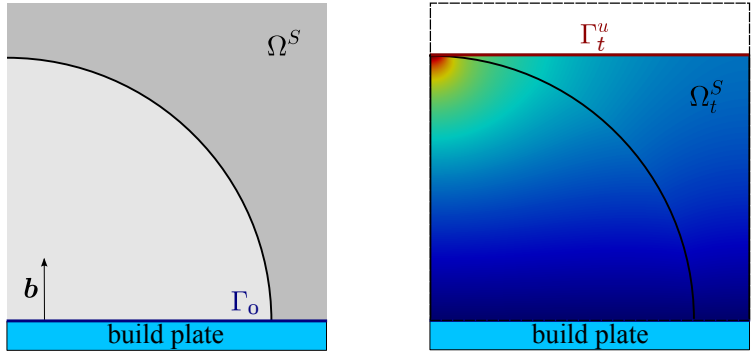}
     \caption{Density and temperature fields in a partially built structure.}
     \label{fig:motivation_heat_formulation}
\end{figure}

The thermal conductivity-based formulation makes use of the steady-state heat-conduction problem, and is stated as:
Find a density scalar field $\rho \in \mathcal{D}$, displacement field $\bm{u} \in V$ and temperature fields $\theta_i \in U_i$, for $i=1,2,\cdots, l$, such that
\begin{equation}
    \begin{array}{cll}
        \displaystyle 
        \min_{\rho \in \mathcal{D}}
        &  a(\rho; \bm{u}, \bm{u}) + \displaystyle \sum_{i=1}^{l} w_i \ b_i(\rho; \theta_i,\theta_i) & \textmd{[total cost]} \\
        \textmd{s.t.} 
& a(\rho; \bm{u},\bm{v}) = L(\bm{v}) \quad \forall \bm{v} \in {V}  & \textmd{[balance equation: main problem]}\\
&  b_i(\rho; \theta_i, \eta_i) = l_i(\rho; \eta_i) \quad \forall \eta_i \in {U}_{i}, \ i = 1, 2, \ldots, l &  \textmd{[energy balance: sub-problems]} 
    \end{array}
    \label{eq:heat_formulation_TO}
\end{equation}
with trial and test spaces $U_i$ equally defined as
\begin{equation*}
    U_i = \left\{ \eta_i \in H^1(\Omega_i) \ | \ \eta_i = 0 \ \textmd{on} \ \Gamma_{0} \right\}, 
    \quad \ i = 1, 2, \ldots, l
    .
\end{equation*}
The above bilinear and linear forms are given according to \eqref{eq:varastandard} and
\begin{equation*}
	\refstepcounter{equation} 
	\latexlabel{eq:bilinear_linear_form_subproblems_heat_self_weight_a}
	\refstepcounter{equation} 
	\latexlabel{eq:bilinear_linear_form_subproblems_heat_self_weight_b}
    b_i(\rho; \theta, \eta) = \int_{\Omega_i} 
    {\color{blue}
    \kappa \left({\left. \rho \right\vert}_{\Omega_i} \right)}
    \ \nabla \theta \cdot \nabla \eta \ud x
    \quad \textmd{and} \quad
    l_i(\rho;\eta) = \int_{\Gamma_{i}^u} 
    {\color{blue}
    {\left. \bar{\rho} (\rho) \right\vert}_{\Omega_i}} q_i \eta \ud s
    \quad \textmd{for} \ i = 1, 2, \ldots, l .
\end{equation*}
{\color{blue}Herein, $\kappa$ is a pseudo thermal conductivity given by
\begin{equation*}
    \kappa (\bar{\rho}\left( \rho \right)) = 
    \kappa_{\min} + \frac{\bar{\rho}}{1+q(1-\bar{\rho})} \left( 1 - \kappa_{\min} \right),
\end{equation*}
where $\kappa_{\min}$}, taken as $10^{-9}$ in the numerical experiments, is a small positive parameter introduced for numerical purposes, {\color{blue} and} $q_i$ is the source of heat at the $i$-th layer.
From now on, we adopt for the penalty factor $q$ the symbol $q_m$ for the main problem and $q_s$ for the sub-problems. 
That the thermal conductivity-based formulation verifies the assumptions of Theorem \ref{teo:convergence} can be shown using similar techniques as for the self-weight-based formulation, employing a uniform trace inequality \cite{boulkhemair2008uniform} to handle the boundary integral in the sub-problems.

{\color{blue}
\paragraph{Sensitivity analysis}
The sensitivity of the objective function with respect to the physical density field $\bar{\rho}$ is obtained using the adjoint method as described in Section \ref{sec:self_weight_based_formulation}.
Let $L_{\textmd{tc}} = L_{\textmd{tc}}(\rho, \bm{u}, \theta_i, \bm{\lambda}, \mu_i)$ be the Lagrangian for the thermal conductivity-based objective function
and the  Lagrange multipliers
 $\bm{\lambda} = -2\bm{u}$ and $\mu_i=-2 w_i \theta_i$, $i=1, \cdots, l$.
By similar arguments, the Gâteaux derivative of $L_{\textmd{tc}}$ with respect to the physical field $\bar{\rho}$ becomes
\begin{equation}
    \frac{\ud L_{\textmd{tc}}}{\ud \bar{\rho}} =
    \frac{\partial}{\partial \bar{\rho}} \left( -a(\rho; \bm{u}, \bm{u}) + \displaystyle \sum_{i=1}^{l} w_i \left[ 2 l_i(\rho; \theta_i) - b_{i}(\rho; \theta_i, \theta_i) \right] \right),
    \label{eq:derivative_thermal_problem}
\end{equation}

\noindent
where the partial derivatives are given by
\begin{equation*}
    \frac{\partial}{\partial \bar{\rho}} b_{i}(\rho; \theta_i, \theta_i)
    = \int_{\Omega_i} 
    \kappa' \left({\left. \rho \right\vert}_{\Omega_i} \right) \delta \bar{\rho}
    \ \nabla \theta_i \cdot \nabla \theta_i \ud x
    \quad \textmd{and} \quad
    \frac{\partial}{\partial \bar{\rho}} l_i(\rho; \theta_i)
    = \int_{\Gamma_{i}^u} 
    \delta \bar{\rho} \ q_i \theta_i \ud s
    \quad \textmd{for} \ i = 1, 2, \ldots, l ,
\end{equation*}

\noindent
with
\begin{equation*}
\kappa ' (\bar{\rho}\left( \rho \right)) =
    \frac{\left(1+q \right)}{{\left[1+q(1-\bar{\rho})\right]}^2} \left( 1 - \kappa_{\min} \right).
\end{equation*}

\noindent
The derivative of the first term on the right-hand side of \eqref{eq:derivative_thermal_problem} is provided in \eqref{eq:first_rhs_partial_derivative}.}

\subsection{{\color{blue}A measure for the overhang surface}}
\label{sec:ameasure}

\noindent
In this study, we adopt as a quantitative measure of overhang the Projected Undercut Perimeter (PUP), that is defined according to \cite{qian2017undercut} as
\begin{equation}
    P_{\bar{\alpha}} 
    := \displaystyle \int_{\Omega}
       H \left( \bm{b} \cdot \dfrac{\nabla \bar{\rho}}{\| \nabla \bar{\rho} \|} - \cos{\left( \bar{\alpha} \right)} \right)
       \bm{b} \cdot \nabla \bar{\rho} \ \mathrm{d}x ,
    \label{eq:pup_definition}
\end{equation}
that quantify the projected perimeter of overhang regions defined by a critical overhang angle $\bar{\alpha}$. 
Herein, $H \left( \cdot \right)$ is the shifted continuous Heaviside projection function of the undercut perimeter
\begin{equation*}
    H \left ( \xi \right) =  
    \dfrac{1}{1+\exp{(-2 \zeta \xi)}},
    \label{eq:pup_heaviside}
\end{equation*}
with $\zeta$ controlling  the approximation to the discontinuous Heaviside function. 
In this work, we adopt the value $\zeta = 10$. 
Notice that in (\ref{eq:pup_definition}), $\nabla \bar{\rho}/\| \nabla \bar{\rho} \|$ estimates the normal to the diffuse interface between the full-material and void regions.
%pointing outwards to the latter.
We also consider the Normalized Projected Undercut Perimeter (NPUP), denoted by $NP_{\bar{\alpha}}$, that is defined as the ratio between $P_{\bar{\alpha}}$ and the area of the build plate, which provides a more appropriate measure of the overhang region.

{\color{blue}
For further comparison, we also introduce the projected perimeter based formulation for minimal overhang angle control described in \cite{qian2017undercut}, here simply identified as PUP formulation, which includes two additional constraints into the standard TO problem \eqref{eq:standad_formulation_TO} based on the PUP and grayness measures:
\begin{equation}
    \begin{array}{cl}
    P_{\bar{\alpha}} \leq \bar{P}_{\bar{\alpha}}
    & \textmd{[PUP constraint]} \\
    \displaystyle \int_{\Omega}
     \left.
     4 \bar{\rho} (1- \bar{\rho}) \ \mathrm{d}x \middle/ \int_{\Omega} \ \mathrm{d}x \right.
       \leq \bar{\epsilon} 
    & \textmd{[grayness constraint]} \\
    \end{array}
    \label{eq:pup_grayness_constraints}
\end{equation}
where $\bar{P}_{\bar{\alpha}}$ and $\bar{\epsilon}$ are the maximum allowed projected perimeter and grayness, respectively.
According to \cite{qian2017undercut}, the grayness constraint (\ref{eq:pup_grayness_constraints}b) prevents the appearance of trivial solutions of gray density with zero density gradient.
It is important to note that this formulation does not include the side-boundary-induced undercut constraint, that may be important to avoid not self-supporting drippings at domain boundaries \cite{qian2017undercut}.}

\section{Numerical examples}
\label{sec:num_results}

In this {\color{blue}section, we} perform topology optimization of 2D and 3D cantilever beams by adopting the proposed methodology with the self-weight formulation \eqref{eq:self_weight_formulation_TO} and the thermal conductivity formulation \eqref{eq:heat_formulation_TO}.
In Table \ref{table:general_parameters_cantilever_beam_problem}, we show the parameters that are common to all simulations. 
The remaining data, specific to each numerical experiment, are provided within the subsections.
{\color{blue}
We adopt the bilinear isoparametric quadrilateral elements for 2D cases and trilinear isoparametric hexahedron elements for 3D ones.}
Linear systems arising from the discretization of the main problem are solved by using the direct solver MUMPS \cite{amestoy2000multifrontal} in the 2D problems and by the PETSc \cite{petsc-web-page} preconditioned conjugate gradient solver in the 3D cases.
The corresponding linear systems arising from the sub-problems are solved by using MUMPS.
The stopping {\color{blue}criterion for the optimizer is based on the physical density variable of two consecutive iterations as}
\begin{equation*}
    {\| \bar{\rho}_{n}-\bar{\rho}_{n-1} \|}_{\infty} < \epsilon.
\end{equation*}
For convenience, we adopt {\color{blue}an} uniform time partition, and the penalty factor $\gamma$ in \eqref{eq:AP_problem} described in terms of the parameter $w_0 \in (0, 1]$ as $\gamma(t_i) = (1-w_0) / w_0$ for $i=1,2,\cdots, l$.
From these considerations, $w_i$ is particularized to $w_i = (T/l) (1-w_0) / w_0$.

The overall computational framework is written in Python using the open-source finite element framework FEniCS \cite{alnaes2015fenics} to set up and solve the state problems. As for the optimization method, we use the Method of Moving Asymptotes (MMA) proposed by Svanberg \cite{svanberg1987method}, whose Python code-version is available in  \cite{Deetman2020}. 
All codes necessary to reproduce the optimizations to be presented are available in  \cite{Haveroth2022}.
{\color{blue}
Finally, we remark that no numerical issues that could be associated with badly scaled objectives and or constraints were reported in the numerical experiments of this section.}

\begin{table}[H]
\centering
\begin{tabular}{p{4.6cm} p{1.6cm} p{1.2cm} | p{4.6cm} p{1.6cm} p{1.2cm}} 
 \hline
 \hline
 Parameter & Symbol & Value & Parameter & Symbol & Value \\
 \hline
 Load [$\mathrm{N}$] & $t$ & $1$ &
       Total process time [$\mathrm{s}$] & $T$ & $1$ \\
 Gravity [$\mathrm{m}/\mathrm{s}^2$] & $g$ & $9.81$ &
       Heat source [$\mathrm{K}$] & $q_i$ & $1$ \\
 Young's modulus [$\mathrm{Pa}$] & $E_0$ & $1$ &
       SIMP penalty & $q_m$ & $5$ \\
 Min. Young's modulus [$\mathrm{Pa}$] & $E_{\min}$ & $10^{-9}$  &
       RAMP penalty & $q_s$ & $5$ \\
 Poisson ratio & $\nu$ & $0.3$ &
       Tolerance & $\epsilon$ & $10^{-2}$ \\        
 Initial density [$\mathrm{Kg}/\mathrm{m}^3$] & $\rho_0$ & $\bar{v}$ &
 Min. conductivity [$\mathrm{W}/(\mathrm{m.K})$] & $k_{\min}$  & $10^{-9}$  \\
 \hline
 \hline
\end{tabular}
\caption{General parameters for all simulations.}
\label{table:general_parameters_cantilever_beam_problem}
\end{table}

\subsection{2D cantilever beam}
\label{sec:2d_cantilever_beam}

The 2D cantilever beam plane-strain problem is described in {\color{blue}Figure \ref{fig:short_cantilever_beam_geometry_boundary_conditions}}.
The left figure corresponds to the main problem, whereas the right one describes the boundary conditions and the  build-direction for the sub-problems.
Other relevant parameters used throughout this section are given in Table \ref{table:parameters_short_cantilever_beam_problem}.

\begin{figure}[H]
     \centering
    \begin{subfigure}[b]{0.45\textwidth}
         \centering
         \includegraphics[scale=1.00]{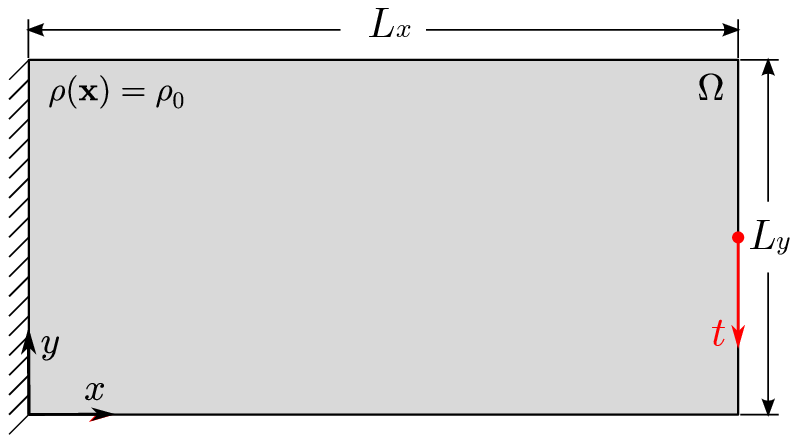}
         \caption{Main problem conditions.}
         \label{fig:short_cantilever_beam_geometry_boundary_conditions_main}
     \end{subfigure}
     \hspace{5pt}
     \begin{subfigure}[b]{0.45\textwidth}
         \centering
         \includegraphics[scale=1.00]{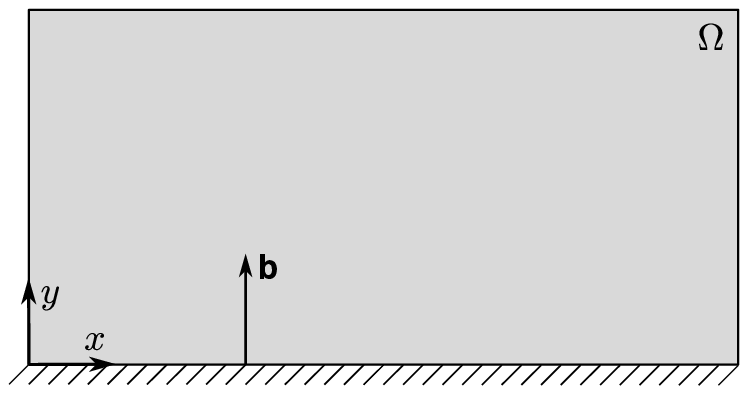}
         \caption{Sub-problems conditions.}
         \label{fig:short_cantilever_beam_geometry_boundary_conditions_sub}
     \end{subfigure}
     \caption{2D cantilever beam problem: geometry, loading, boundary conditions, and density field initial condition.}
     \label{fig:short_cantilever_beam_geometry_boundary_conditions}
\end{figure}

\begin{table}[h!]
\centering
\begin{tabular}{p{4.2cm} p{2.2cm} p{1.7cm} | p{3.4cm} p{1.8cm} p{1.0cm}}
 \hline
 \hline
 Parameter & Symbol & Value & Parameter & Symbol & Value \\
 \hline
  Dimension [$\mathrm{m}$] & $L_x \times L_y$ & $12 \times 6$ & 
    Filter radius [$\mathrm{m}$] & $\bar{r}$ & $1.25$ \\
 Thickness [$\mathrm{m}$] & $-$ & $1$ &
    Threshold parameters & $\beta_{\min}$ - $\beta_{\max}$ & $1$ - $32$ \\
 Average density [$\mathrm{Kg}$/$\mathrm{m}^3$] & $\bar{v}$ & $0.5$ &
    & $\beta_{d}$ & $100$ \\
 Quadrilateral elements & $N_x \times N_y$ & $240 \times 120$ &
    & $\eta$ & $0.5$ \\          
 \hline
 \hline
\end{tabular}
\caption{Complementary parameters for the 2D cantilever beam problem.}
\label{table:parameters_short_cantilever_beam_problem}
\end{table}

Figure \ref{fig:parameters_short_cantilever_beam_problem_initialaa} presents an optimized design obtained with the standard TO formulation \eqref{eq:standad_formulation_TO}, where the blue circle indicates the adopted filter size.
With a compliance of $J_D = 58.79\,\mathrm{Nm}$, the manufacturing of such structure, with this build direction, may not be appropriate for AM.
When the PUP formulation is employed, the designs of Figures \ref{fig:parameters_short_cantilever_beam_problem_qian_initial} and \ref{fig:parameters_short_cantilever_beam_problem_qian_II_initial}, referred to as structures I and II, are obtained.
These designs, having compliance of $J_D=86.38\,\mathrm{Nm}$ and $J_D=74.25\,\mathrm{Nm}$, were obtained by using filter radius $\bar{r} = 1.25\,\mathrm{m}$ and $\bar{r} = 0.50\,\mathrm{m}$, respectively. From the figures, we can clearly see the dependence of the final structure on $\bar{r}$.
The adopted PUP and grayness constraints upper limits are $\bar{P}_{45^\circ} = 0.5$ and $\bar{\epsilon} = 0.6$, with $\beta_d = 25$ starting from the $50$th iteration, {\color{blue}as proposed in \cite{qian2017undercut}.}
Figure \ref{fig:2D_classical_angle_versus_PUP_formulations_initial_a} shows their corresponding NPUP measures for different threshold angles.
Note that structure I exhibits smaller values of NPUP when compared to the standard structure, as expected, since it is based on the PUP formulation.
However, the reduction of the radius $\bar{r}$ may induce the formation 
of drippings, an issue generally present when adopting geometric AM constraints, as illustrated by structure II.
{\color{blue}
Although suppressing drippings be possible, it needs potentially onerous computations (see \cite{mezzadri2020second}, where the dripping effect is avoided by filters or constraints that require computing the second-order derivatives of the density function).}
As will be shown later on, the $\mathbb{AP}$ formulation proposed in this work
delivers improved designs obtained by using a small filter radius while avoiding the undesirable presence of drippings.

\begin{figure}[H]
    \centering
    \includegraphics[scale=0.36]{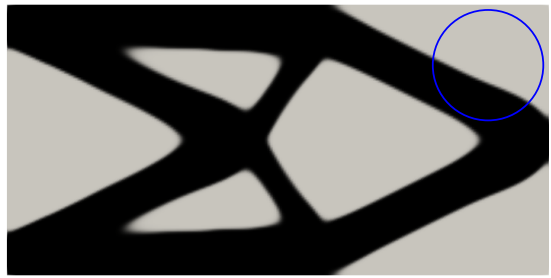}
    \caption{Standard reference structure.}
    \label{fig:parameters_short_cantilever_beam_problem_initialaa}
\end{figure}

\begin{figure}[H]
\centering
%\hspace{0.5cm}
\begin{minipage}{.45\textwidth}
\begin{subfigure}[t]{0.9\textwidth}
  \centering
  \includegraphics[scale=0.36]{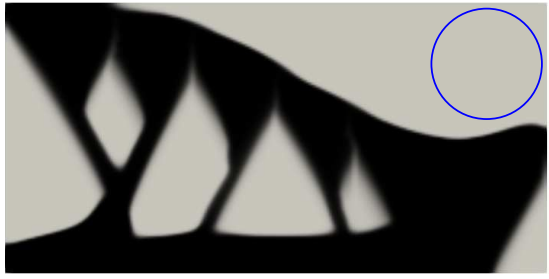}
  \captionof{figure}{Structure I. Suggested filter radius.}
  \label{fig:parameters_short_cantilever_beam_problem_qian_initial}
\end{subfigure}
\begin{subfigure}[t]{0.9\textwidth}
  \centering
  \includegraphics[scale=0.36]{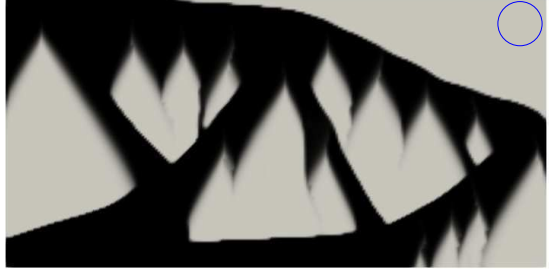}
  \captionof{figure}{Structure II. Reduced filter radius.}
  \label{fig:parameters_short_cantilever_beam_problem_qian_II_initial}
\end{subfigure}
\vspace{0.5cm}
\end{minipage}
\hspace{-0.5cm}
\begin{subfigure}{0.5\textwidth}
  \centering
  \includegraphics[scale=0.71]{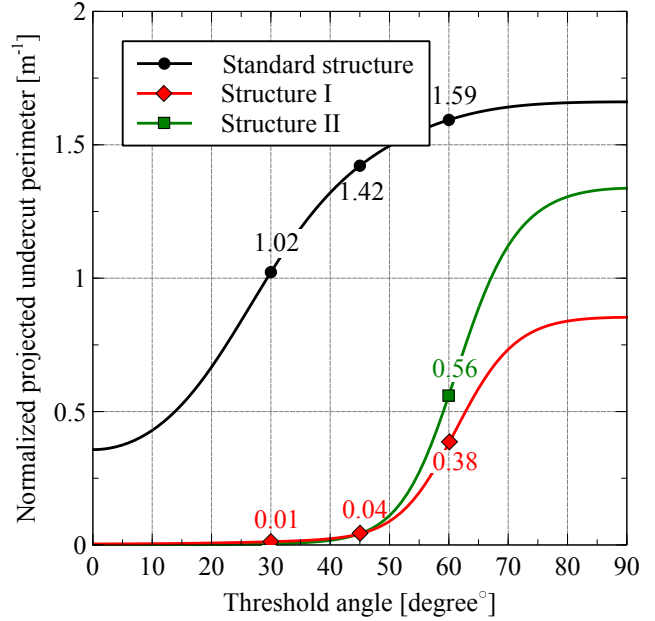}
  \captionof{figure}{NPUP measure with respect to the threshold angle for the standard structure and those obtained via PUP formulation.}
  \label{fig:2D_classical_angle_versus_PUP_formulations_initial_a}
\end{subfigure}
\caption{Structures with the suggested filter radius and a reduced filter radius version where the dripping effect is present.}
\label{fig:2D_classical_angle_versus_PUP_caption_initial}
\end{figure}

\subsubsection{Self-weight-based results}
\label{sec:2d_self_weight_based_results}

Let us present the results for the 2D cantilever beam problem by using the self-weight-based formulation described in Section \ref{sec:self_weight_based_formulation}.
Additionally to the parameters showed in Tables \ref{table:general_parameters_cantilever_beam_problem} and \ref{table:parameters_short_cantilever_beam_problem}, consider $w_0 = 0.10$.
The artificial body load is defined as $\bm{f}(\bar{\rho}\left( \rho \right)) = -g_{p} \, \bar{\rho} \, \bm{b}$, with the normalized gravity $g_{p}$ given by
\begin{equation*}
    g_{p} = \left.
     g \middle/ \left( \bar{v} \int_{\Omega} \mathrm{d}x \right) 
     \right. ,
\end{equation*}
allowing a balance between external and self-weight loads irrespective of the domain size.
Notice that, without this consideration $\bm{f}$ will be dominant (negligible) as compared to $\bm{t}$ for large (small) domains and consequently, the optimization processes will be controlled by the sub-problems (resp., the main problem).
In this case, a new $w_0$ weight adjustment becomes necessary.

As the first result, let us analyze the convergence of the  optimization process  with respect to the number of optimization steps.
Figure \ref{fig:complete_layers_x_compliance_02_with_figures_type2} illustrates the compliance, the total cost, and the grayness over the optimizer iterations when considering $40$ layers.
The process exhibits jumps as the $\beta$-threshold parameter increases from $\beta_{\min}$ to $\beta_{\max}$, 
aiming to produce densities close to a binary $\{0,1\}$ distribution.
The grayness curve is an indicator of such behavior, and in this case measures $4.11\%$ at the converged state.
Ideally, the grayness assumes the value of $0\%$ in the limit $\beta \to \infty$.
 At the end of the optimization process, the compliance is $J_D=78.91\,\mathrm{Nm}$. This value can be compared to the compliance of the structures of Figure \ref{fig:2D_classical_angle_versus_PUP_caption_initial}.
 The left part of the figure shows the corresponding designs obtained after {\color{blue}$150$(A) and $959$(B) (converged stage)} iterations, with the black region defining the {\color{blue}solid domain $\Omega^S$}.

\begin{figure}[H]
    \centering
    \includegraphics[scale=1.00]{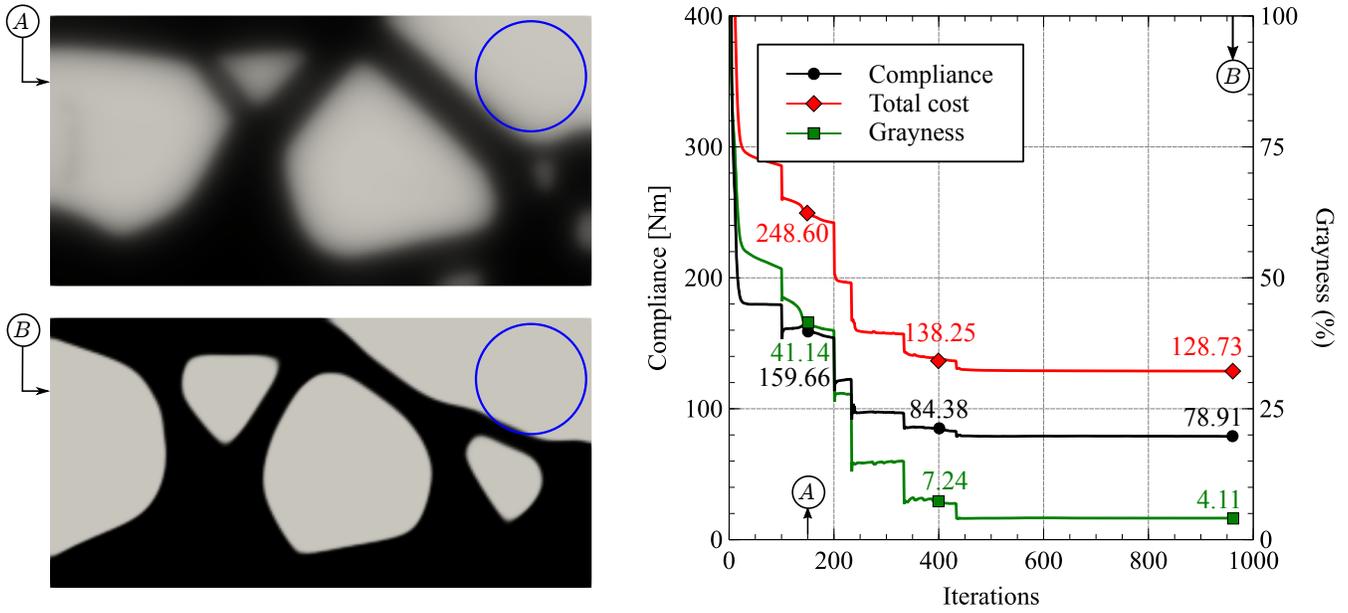}
    \caption{{\color{blue}The self-weight-based formulation for $40$ layers: Compliance, total cost, and grayness over the iterations. Density distribution details at the left.}}
    \label{fig:complete_layers_x_compliance_02_with_figures_type2}
\end{figure}

The number of layers plays a crucial role in the formulation as a discretization aspect which may affect the final design.
Figure \ref{fig:complete_layers_x_compliance_01_with_figures_type2} shows the compliance and NPUP values (for $30^\circ$, $45^\circ$ and $60^\circ$) as a function of the number of layers.
Details on the left show the designs obtained for {\color{blue}$5$} and $120$ layers.
These results also suggests a fast convergence of the compliance with respect to the number of layers.

\begin{figure}[H]
    \centering
    \includegraphics[scale=1.00]{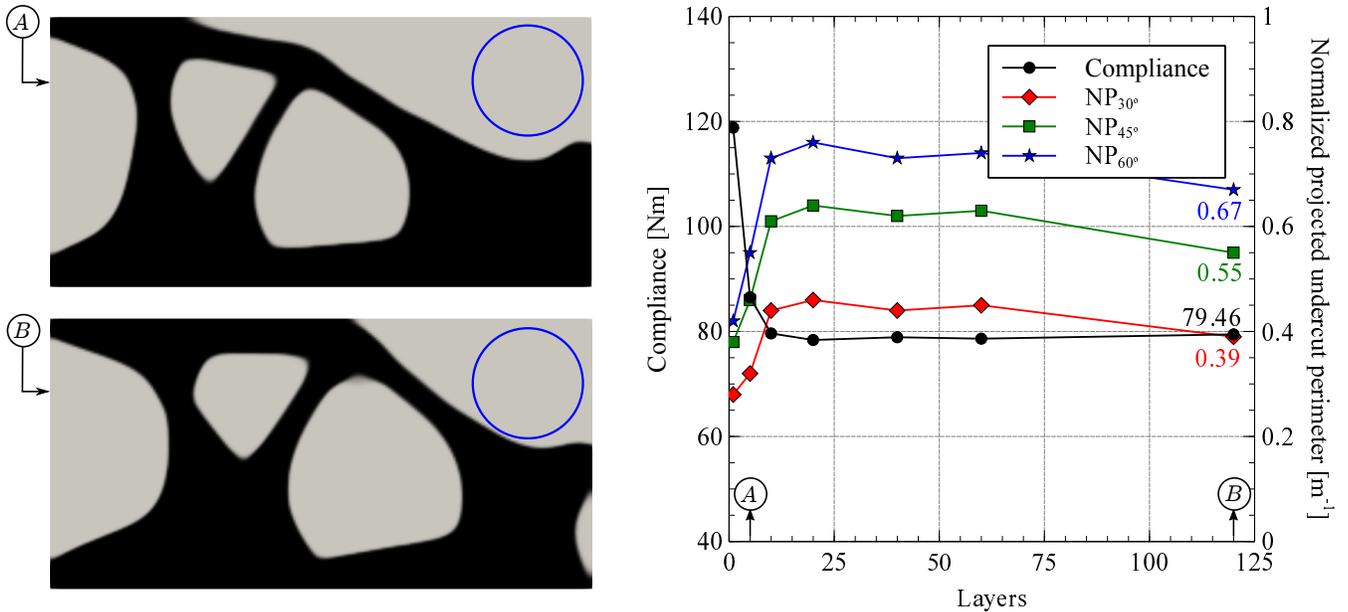}
    \caption{{\color{blue}The self-weight-based formulation: Compliance and NPUP values for different number of layers.}}
    \label{fig:complete_layers_x_compliance_01_with_figures_type2}
\end{figure}

The weight $w_0$ highly affects the final design, as illustrated in Figure \ref{fig:complete_weight_x_compliance_with_figures_type2}.
Note that a small value of $w_0$ leads to a high structure compliance and buildability reflected by the small NPUP values.
This behavior is reversed when increasing $w_0$.
These results show that the user has to choose a value of $w_0$ that balances these two antagonistic properties.
Figures at the left shows the designs obtained when using $w_0=0.2$ and $w_0=0.4$.

\begin{figure}[H]
    \centering
    \includegraphics[scale=1.0]{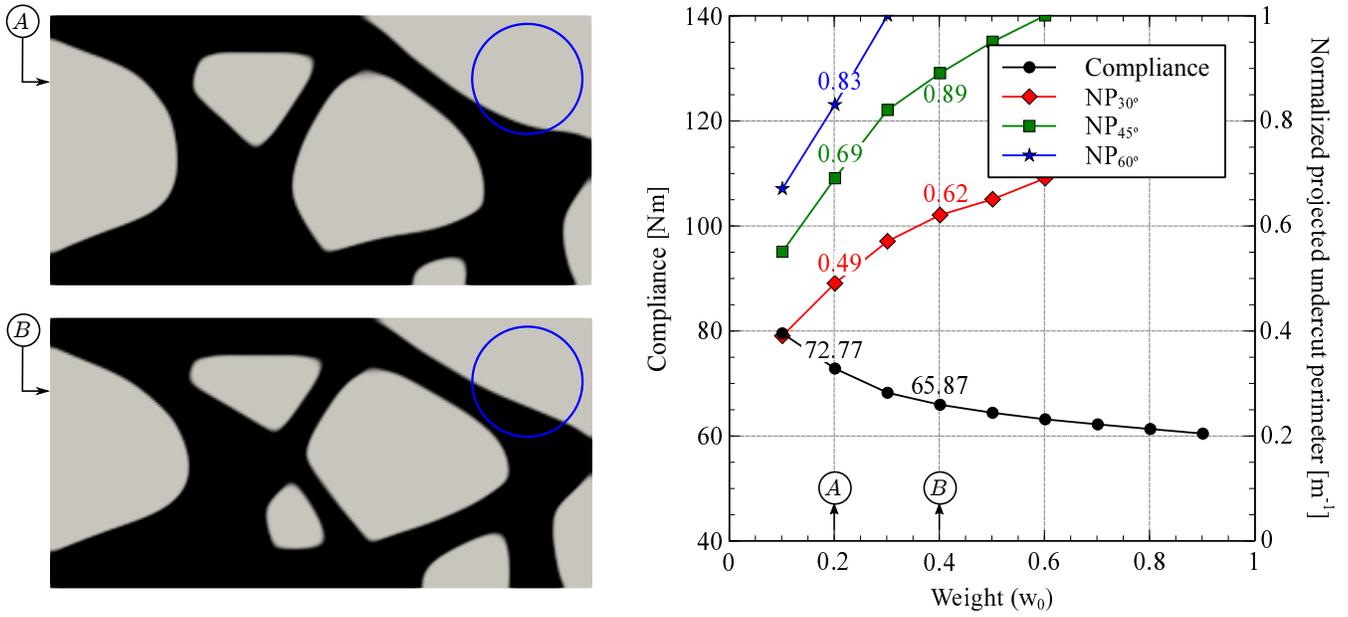}
    \caption{{\color{blue}The self-weight-based formulation for $120$ layers: Compliance and NPUP values for different weights.}}
    \label{fig:complete_weight_x_compliance_with_figures_type2}
\end{figure}

\begin{figure}[H]
    \centering
    \includegraphics[scale=1.00]{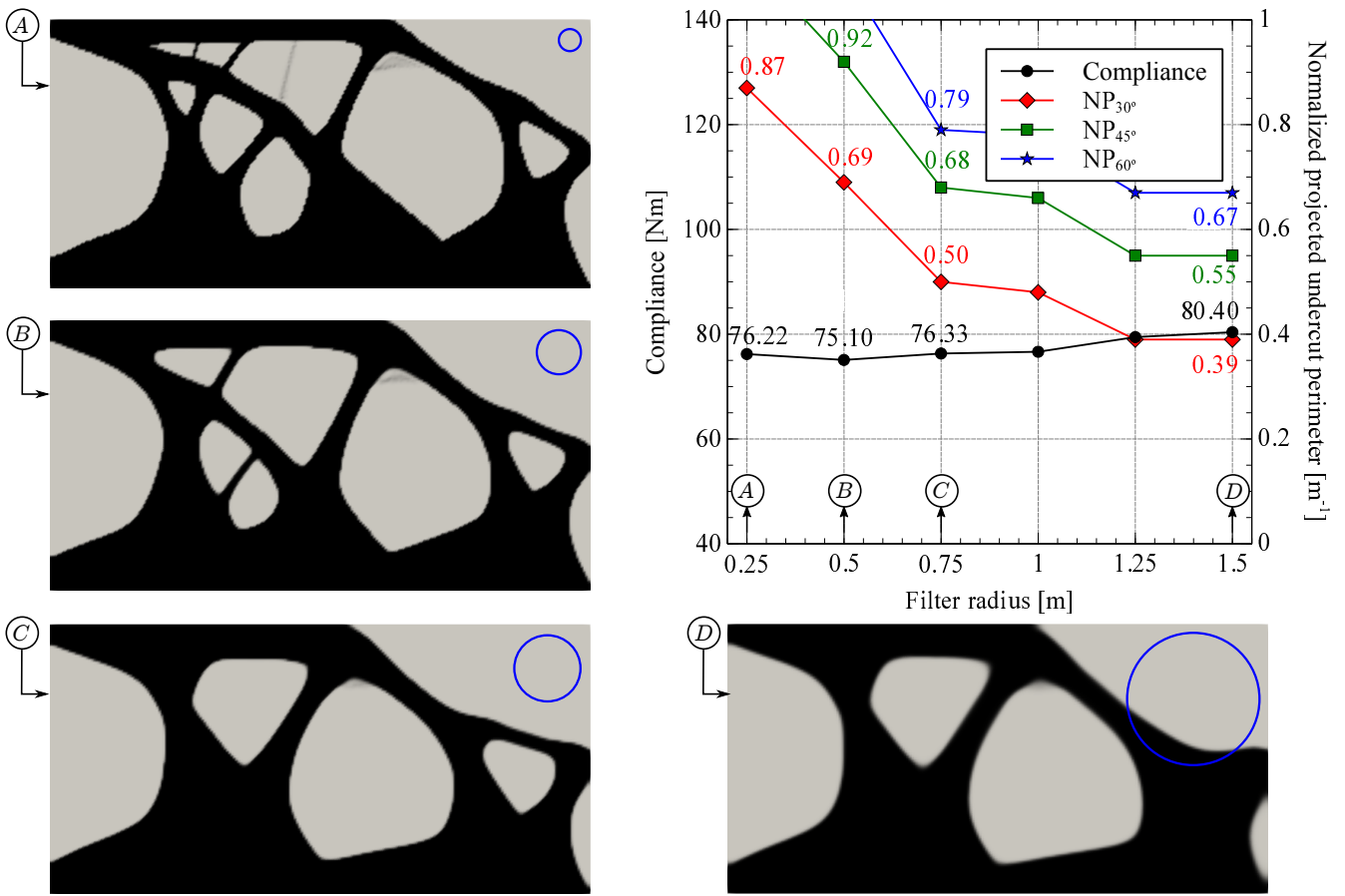}
    \caption{{\color{blue}The self-weight-based formulation for $120$ layers: Compliance and NPUP values for different filter radius.}}
    \label{fig:complete_radius_x_compliance_with_figures_type2}
\end{figure}

Another relevant variation is the filter radius.
As observed in the Figure \ref{fig:complete_radius_x_compliance_with_figures_type2}, designs with smallest NPUP values and high compliance are obtained with filter radius increasing.
According to the details for structures with $\bar{r}=0.25\,\mathrm{m}$, $0.5\,\mathrm{m}$, $0.75\,\mathrm{m}$ and $1.5\,\mathrm{m}$, larger filter radius acts to close holes, consequently avoiding overhang regions.
An alternative to decrease the NPUP values using small filter radius is to conveniently modify the weight $w_0$, as illustrated when discussed the results of Figure \ref{fig:complete_weight_x_compliance_with_figures_type2}. 
It is important to highlight that the dripping effect and sharp corners are not present in this formulation, even for the smallest filter radius.

\subsubsection{Thermal conductivity-based formulation}
\label{sec:res_thermal}

Let's focus now on the thermal conductivity-based formulation described in Section \ref{sec:thermal_conductivity_based_formulation}.
In addition to the parameters given in Tables \ref{table:general_parameters_cantilever_beam_problem} and \ref{table:parameters_short_cantilever_beam_problem}, consider {\color{blue}$w_0 = 0.25$} and $q_i = 1$ for $i=1,2,\ldots, l$.

Figure \ref{fig:heat_layers_x_compliance_02_with_figures_type2} shows the compliance, total cost, and grayness measure over the optimizer iteration for $40$ layers.
Similar to the discussion given concerning Figure \ref{fig:complete_layers_x_compliance_02_with_figures_type2}, we observe jumps, oscillations, and a non-decreasing behavior for the compliance curve due to the same factors previously discussed.
{\color{blue}
The final design has $J_D=84.27\,\mathrm{Nm}$ and $3.19\%$ of grayness.}
Details at the left show the corresponding designs after {\color{blue}$150$(A) and $585$(B)} (converged stage) iterations.

\begin{figure}[H]
    \centering
    \includegraphics[scale=1.00]{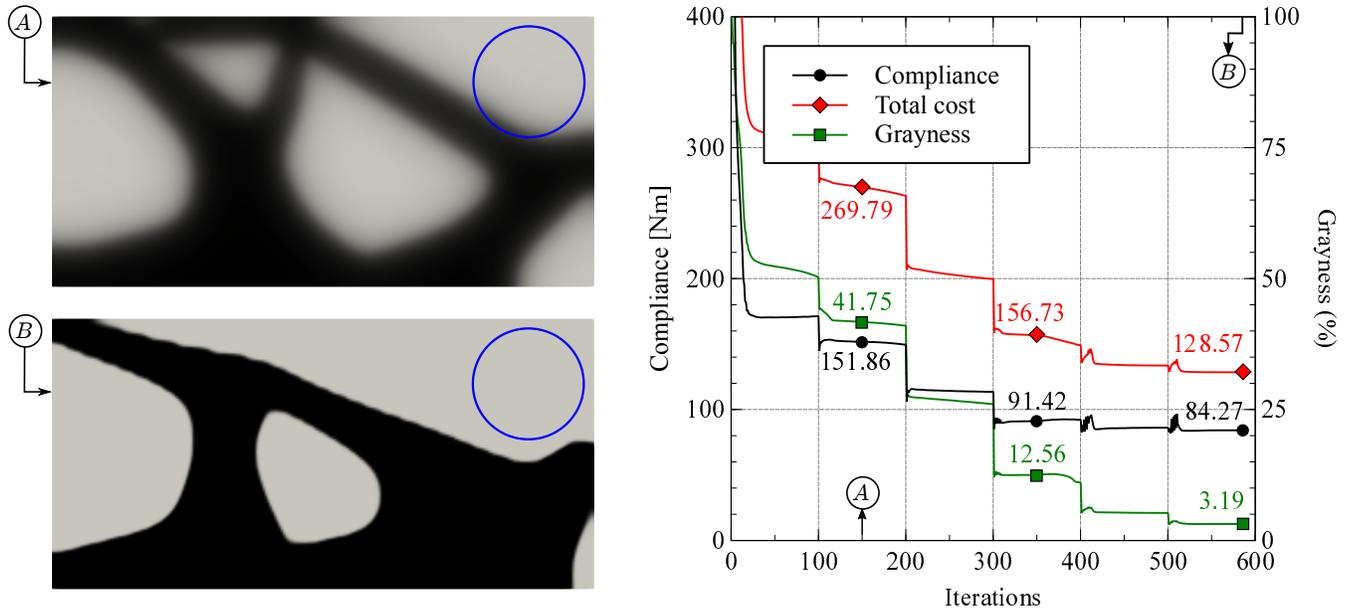}
    \caption{{\color{blue}Thermal conductivity-based formulation for $40$ layers: Compliance, total cost, and grayness over the iterations. Density distribution details at the left.}}
    \label{fig:heat_layers_x_compliance_02_with_figures_type2}
\end{figure}

The effect of the (number of) layers on the final design can be seen more clearly using this formulation, as illustrated in Figure \ref{fig:heat_layers_x_compliance_01_with_figures_type2}.
{\color{blue}
It is possible to observe boundary oscillations occurring between two consecutive layers due to the thermal loading applied on $\Gamma^u_i$.
As illustrated, they can be reduced with the layer number increasing}.
The compliance oscillates around $83\,\mathrm{Nm}$ for all cases, with the converged topology obtained for more than $40$ layers.

{\color{blue}Figures \ref{fig:heat_weight_x_compliance_with_figures_type2_120L} and \ref{fig:heat_radius_x_compliance_with_figures_type2_120L} show} the effects of the weight $w_0$ and the filter radius $\bar{r}$ on the compliance and NPUP values.
Similar to Section \ref{sec:2d_self_weight_based_results}, the increase of the weight leads to better structural responses at the cost of decreasing the manufacturability.
This behavior reverses with the filter radius increase.

\begin{figure}[H]
    \centering
    \includegraphics[scale=1.00]{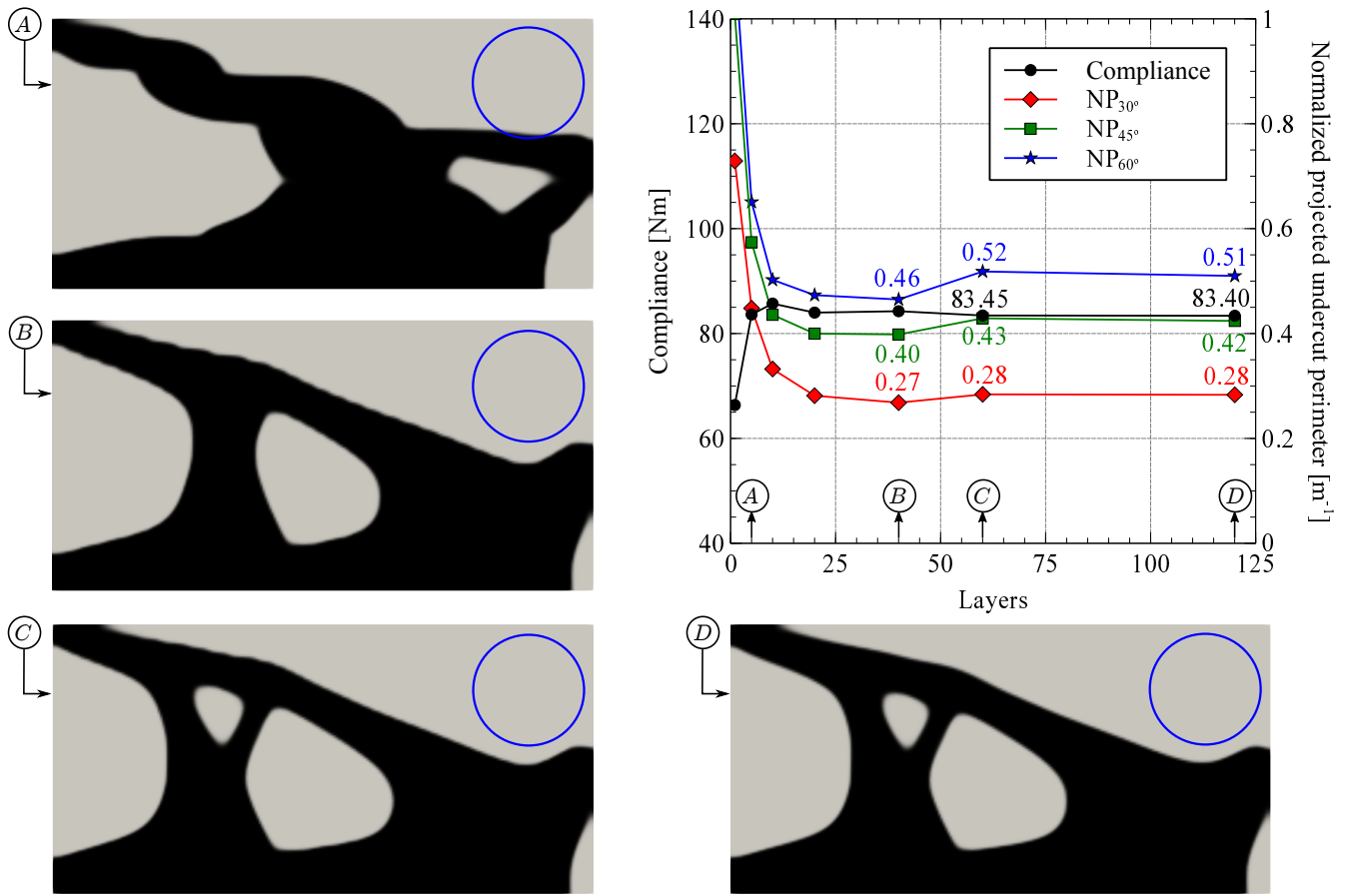}
    \caption{{\color{blue}Thermal conductivity-based formulation: Compliance and NPUP values for different layers.}}
    \label{fig:heat_layers_x_compliance_01_with_figures_type2}
\end{figure}

\begin{figure}[H]
    \centering
    \includegraphics[scale=1.0]{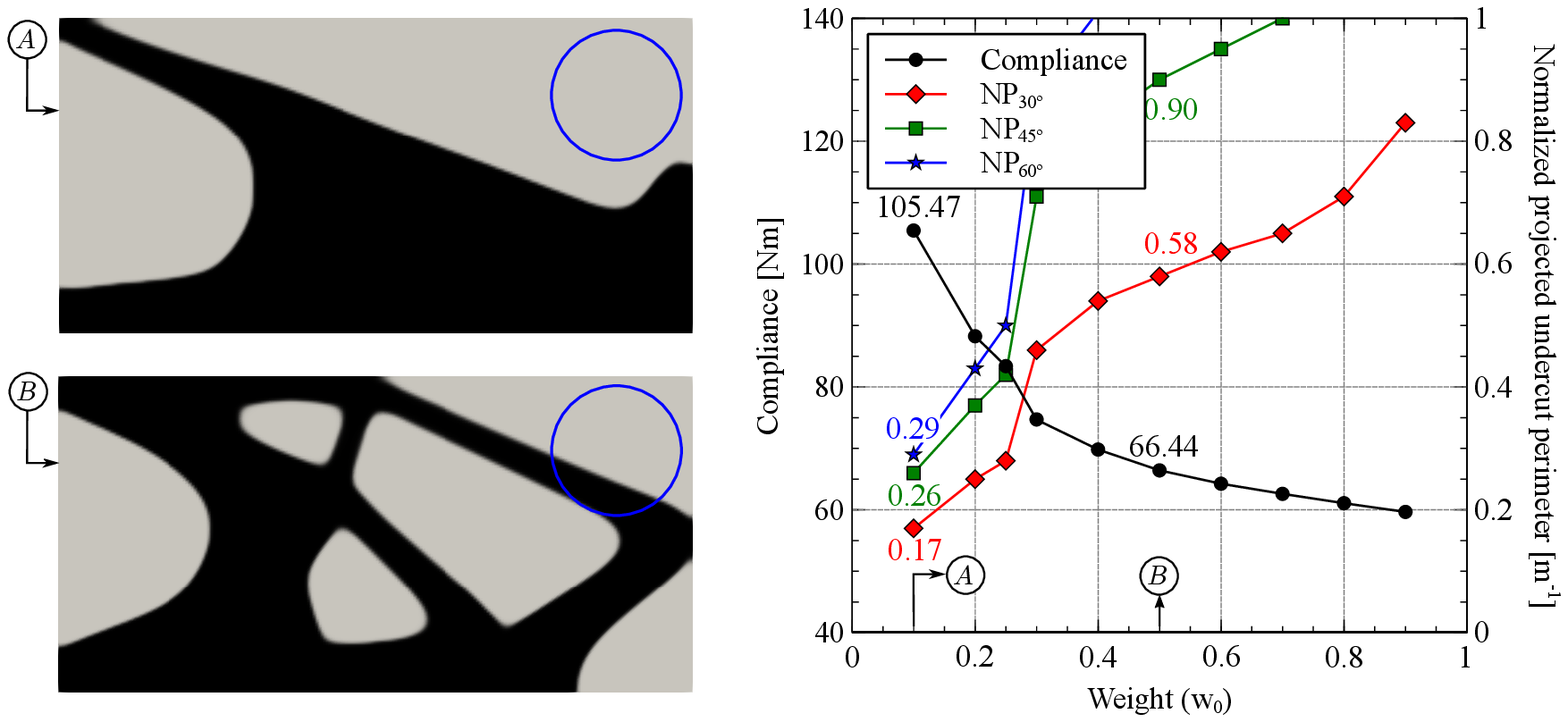}
    \caption{{\color{blue}Thermal conductivity-based formulation for $120$ layers: Compliance and NPUP values for different weights.}}
    \label{fig:heat_weight_x_compliance_with_figures_type2_120L}
\end{figure}

\begin{figure}[H]
    \centering
    \includegraphics[scale=1.00]{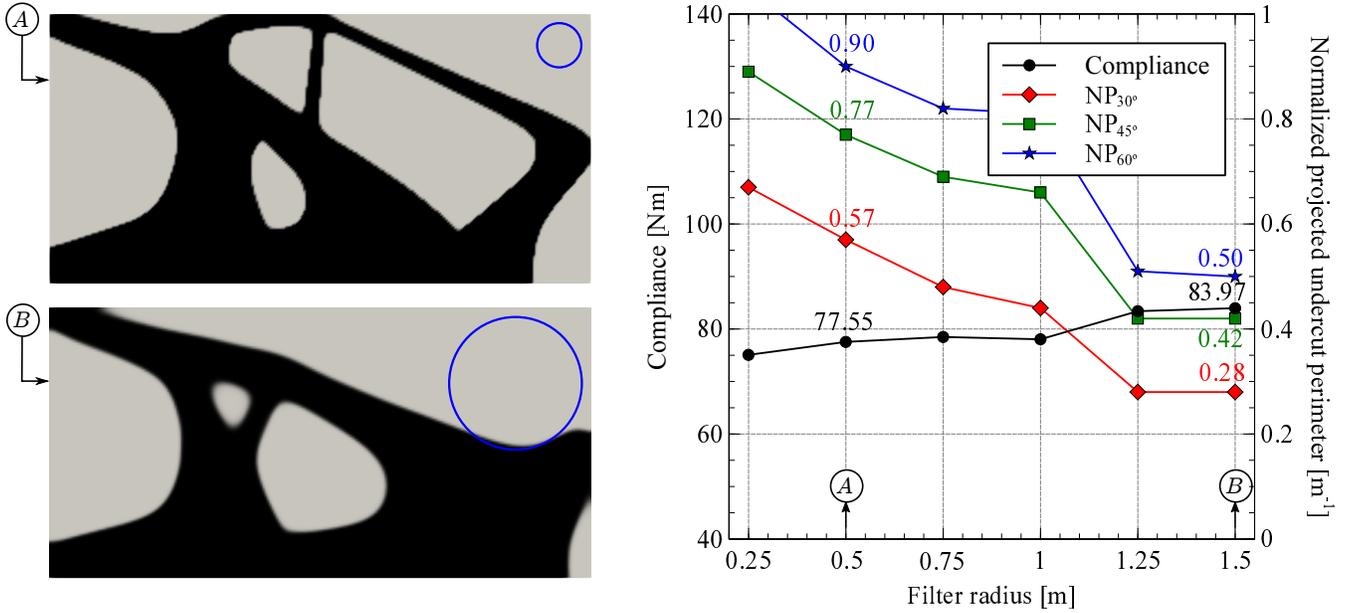}
    \caption{{\color{blue}Thermal conductivity-based formulation for $120$ layers: Compliance and NPUP values for different filter radius.}}
    \label{fig:heat_radius_x_compliance_with_figures_type2_120L}
\end{figure}

{\color{blue}
Finally, in order to check the sensitivity of the optimized design concerning the initial density field, we performed the simulation D of Figure \ref{fig:heat_layers_x_compliance_01_with_figures_type2}, changing the initial condition as indicated in Figure \ref{fig:init_densig_field_casea}.
The result, shown in Figure \ref{fig:init_densig_field_caseb} indicates a similar topology of the one illustrated in Figure
\ref{fig:heat_layers_x_compliance_01_with_figures_type2}(D).}

\begin{figure}[H]
     \centering
    \begin{subfigure}[b]{0.45\textwidth}
         \centering
         \includegraphics[scale=0.36]{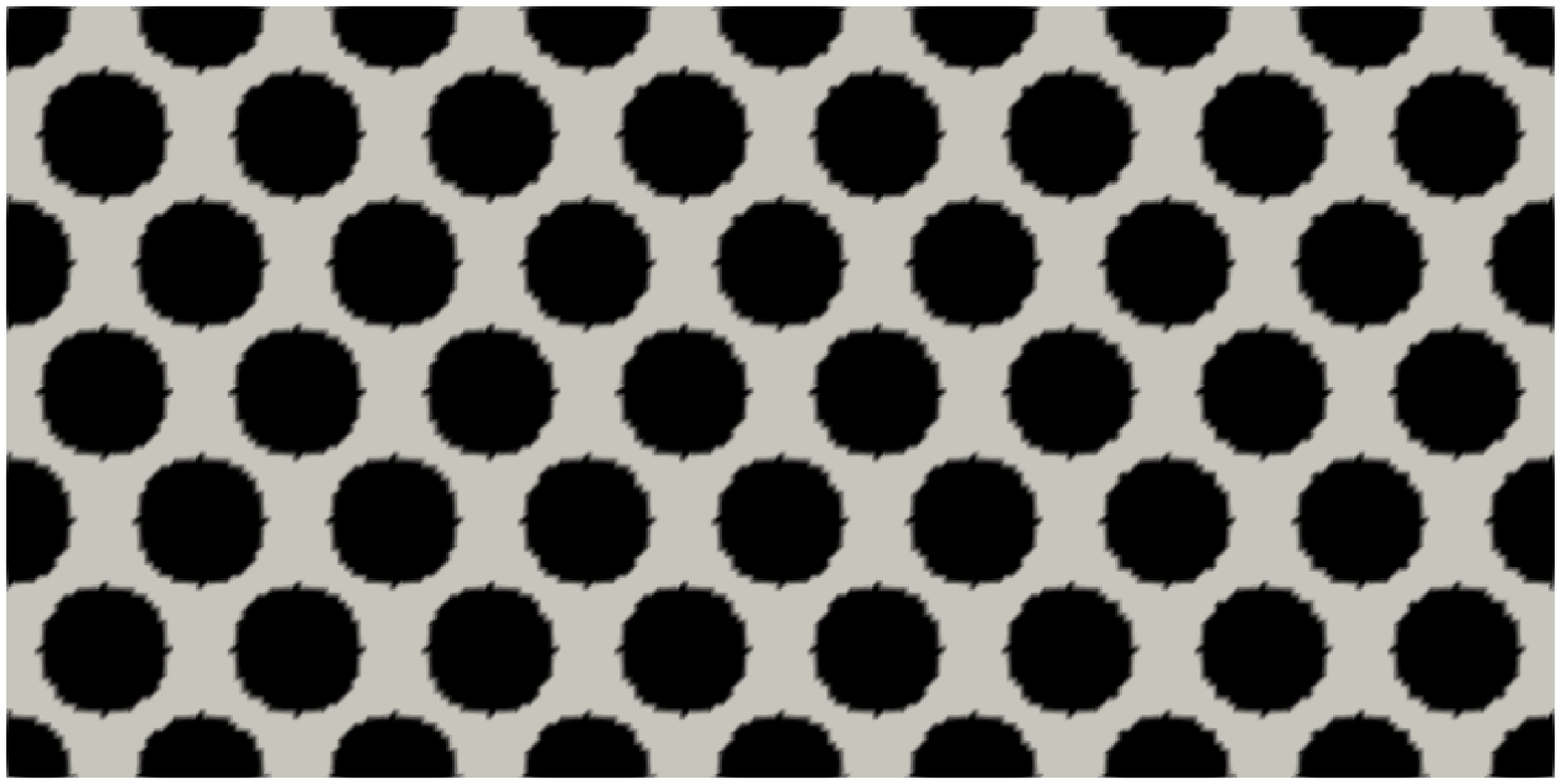}
         \caption{Initial condition for the density field.}
         \label{fig:init_densig_field_casea}
     \end{subfigure}
     \hspace{5pt}
     \begin{subfigure}[b]{0.45\textwidth}
         \centering
         \includegraphics[scale=0.36]{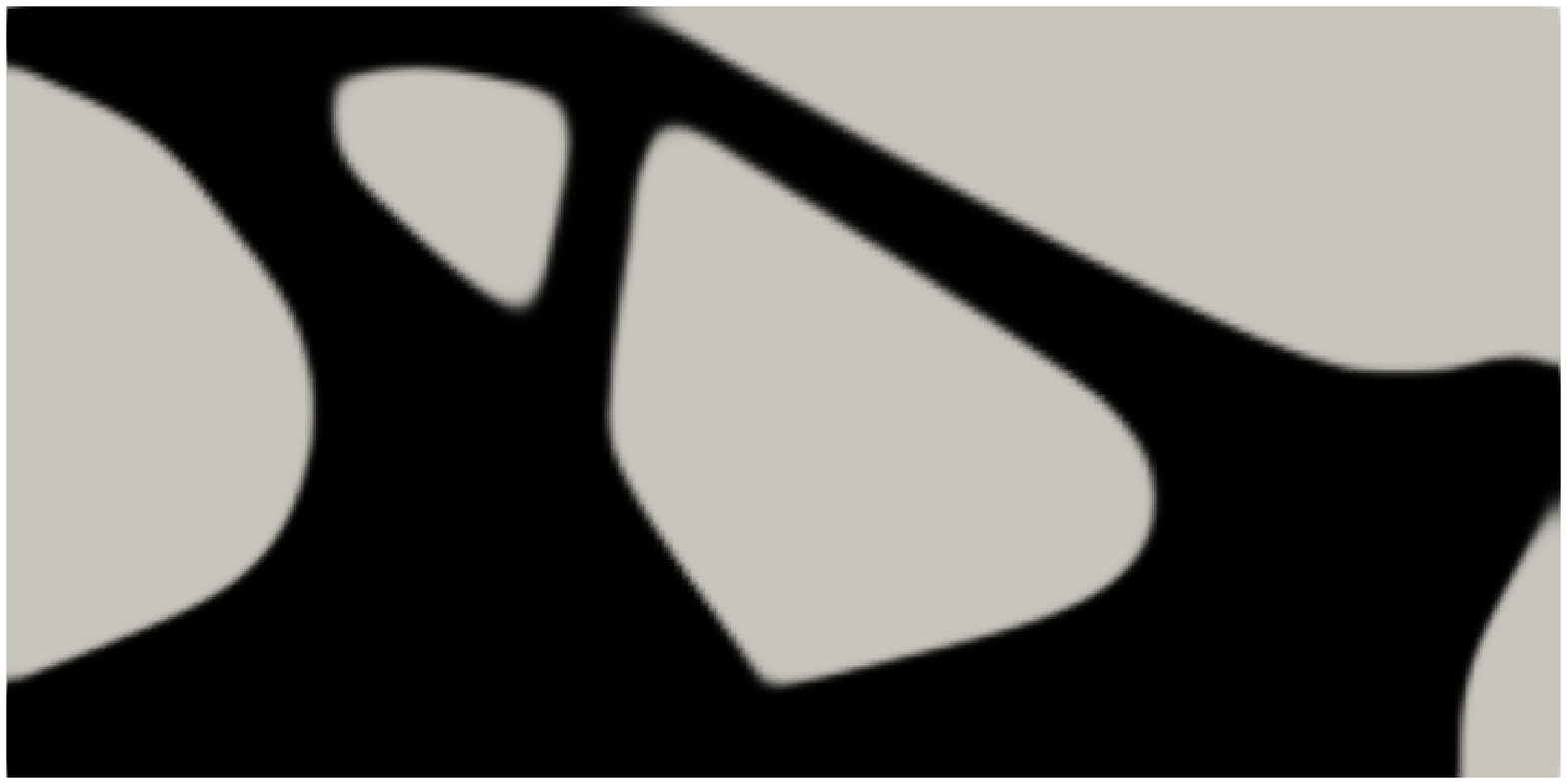}
         \caption{Resulting design after the optimization.}
         \label{fig:init_densig_field_caseb}
     \end{subfigure}
     \caption{{\color{blue}Thermal conductivity-based formulation for $120$ layers: Alternative initial condition for the density field.}}
     \label{fig:heat_layers_x_compliance_01_with_figures_type2_initial_condition}
\end{figure}

\subsubsection{Comparison and performance issues}
\label{sec:comparison}

As observed in the previous sections (see Figures \ref{fig:complete_radius_x_compliance_with_figures_type2} and \ref{fig:heat_radius_x_compliance_with_figures_type2_120L}), the $\mathbb{AP}$ formulation enables the use of small filter radius preventing the appearance of the undesirable dripping effect and sharp corners.
These resulting designs are alternative solutions for reducing overhang regions, or equivalently, increasing manufacturability by AM means.
For a fair comparison, Figure \ref{fig:2D_classical_angle_versus_PUP_formulations_III} shows the NPUP values considering different threshold angles for the standard reference structure and those obtained via $\mathbb{AP}$ formulation with $\bar{r}=0.50\,\mathrm{m}$.
There is a substantial reduction in NPUP values when compared to the standard structure.
{\color{blue}
Although they are higher than those provided when using PUP formulation for $\bar{\alpha} < 60^\circ$, the latter may leads to the formation of drippings, as illustrated in Figure \ref{fig:2D_classical_angle_versus_PUP_formulations_initial_a}
(see discussion on the second paragraph of Section \ref{sec:2d_cantilever_beam}).}

It is important to highlight {\color{blue}that, instead of working} with Heaviside functions in a fixed mesh $\mathcal{M}$ of $\Omega$ to mimic the build process, our implementation considers a particular mesh $\mathcal{M}_i$ for each domain $\Omega_i \subset \Omega$. 
This enables to save computational time when solving the sub-problems since they are solved for each $\Omega_i$. 
This comes at the price of having to map the density degrees of freedom between different meshes, which is done easily and at a low cost in the FEniCS platform.
Figure \ref{fig:2D_layers_x_time_II} illustrates the average of CPU time expended in each optimizer iteration.
The thermal conductivity-based formulation is evidently more attractive for a purely computational point-of-view than the self-weight-based version.
This is due to the reduced size of the linear system to be solved when compared to the one associated with the linear elasticity balance equation.

\begin{minipage}{.44\textwidth}
\hspace{-0.5cm}
\begin{figure}[H]
    \centering
    \includegraphics[scale=0.71]{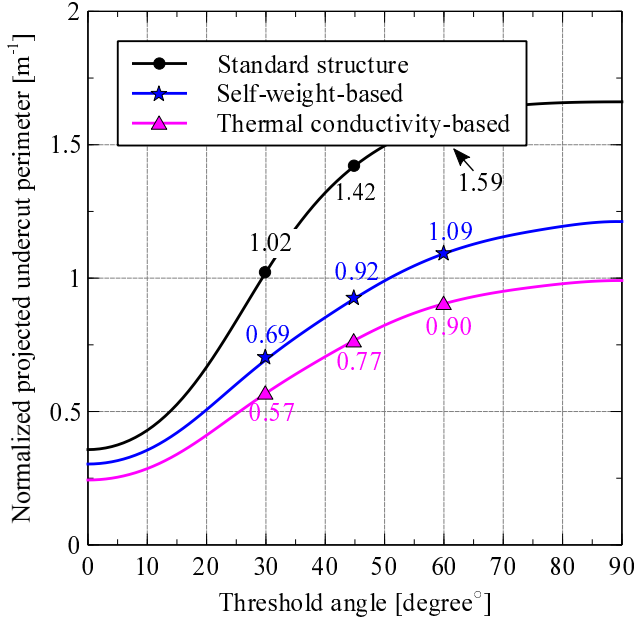}
    \caption{{\color{blue}NPUP measure with respect to the threshold angle for the standard structure and those obtained via $\mathbb{AP}$ formulation.}}
    \label{fig:2D_classical_angle_versus_PUP_formulations_III}
\end{figure}
\end{minipage}
\hspace{1cm}
\begin{minipage}{.44\textwidth}
\begin{figure}[H]
    \centering
    \vspace{-0.4cm}
    \includegraphics[scale=0.71]{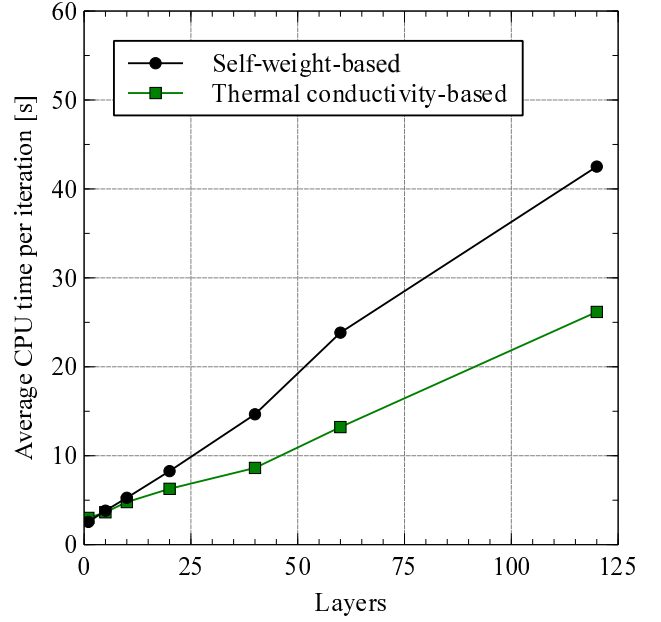}
    \caption{Number of layers versus average CPU time per iteration.}
    \label{fig:2D_layers_x_time_II}
\end{figure}
\end{minipage}

\subsection{2D MBB beam}
\label{sec:2d_mbb}

Consider the symmetric MBB beam under the hypothesis of the plane-strain state illustrated by Figure \ref{fig:mbb_beam_geometry_boundary_conditions}, subject to the parameters given in Table \ref{table:parameters_mbb_beam_problem}.
\citet{amir2018topology} proposed a similar problem using self-weight-based loading for the sub-problems as a way to reduce overhang surfaces; however, the corresponding formulations are not equivalent.
The purpose of this section is to apply our self-weight-based and thermal conductivity-based formulations to verify the formulation's generality when applied to a different set of boundary conditions.

\begin{figure}[H]
     \centering
    \begin{subfigure}[b]{0.45\textwidth}
         \centering
         \includegraphics[scale=1.00]{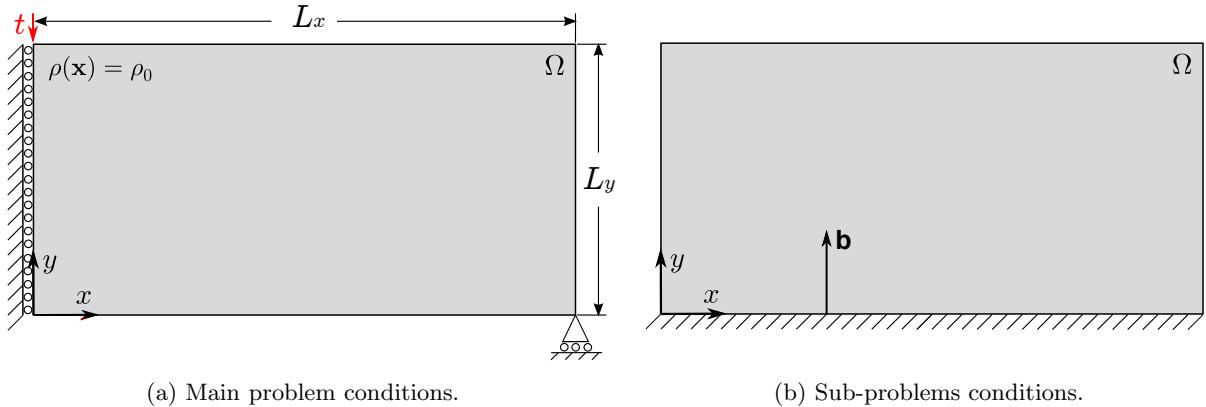}
         \caption{Main problem conditions.}
         \label{fig:mbb_beam_geometry_boundary_conditions_main}
     \end{subfigure}
     \hspace{5pt}
     \begin{subfigure}[b]{0.45\textwidth}
         \centering
         \includegraphics[scale=1.00]{4B_18B.eps}
         \vspace{0.4cm}
         \caption{Sub-problems conditions.}
         \label{fig:mbb_beam_geometry_boundary_conditions_sub}
     \end{subfigure}
     \caption{2D MBB beam problem: geometry, loading, boundary conditions, and density field initial condition.}
     \label{fig:mbb_beam_geometry_boundary_conditions}
\end{figure}

\begin{table}[h!]
\centering
\begin{tabular}{p{4.2cm} p{2.2cm} p{1.7cm} | p{3.4cm} p{1.8cm} p{1.0cm}}
 \hline
 \hline
 Parameter & Symbol & Value & Parameter & Symbol & Value \\
 \hline
  Dimension [m] & $L_x \times L_y$ & $160 \times 80$ & 
    Filter radius [m] & $\bar{r}$ & $2.40$ \\
 Thickness [m] & $-$ & $1$ &
  Threshold parameters & $\beta_{\min}$ - $\beta_{\max}$ & $1$ - $4$ \\
 Average density [Kg/m$^3$] & $\bar{v}$ & $0.6$ &
    & $\beta_{d}$ & $100$ \\
 Quadrilateral elements & $N_x \times N_y$ & $320 \times 160$ &
    & $\eta$ & $0.5$ \\
 \hline
 \hline
\end{tabular}
\caption{Complementary parameters for the 2D MBB beam problem.}
\label{table:parameters_mbb_beam_problem}
\end{table}

{\color{blue}
Figure \ref{fig:mbb_beam_problem} illustrates the optimized structure obtained via the standard TO formulation \eqref{eq:standad_formulation_TO}, whereas Figure \ref{fig:2D_mbb_classical_angle_versus_PUP_caption_initial} contemplates the results by using the self-weight (\ref{fig:mbb_beam_problem_selfweight}), and thermal conductivity-based (\ref{fig:mbb_beam_problem_heat}) formulations with $w_0=0.10$ and $w_0=0.99$, respectively.
Both strategies presented competitive compliance values and overhang surface reduction, as indicated by the Figure \ref{fig:2D_mbb_classical_angle_versus_PUP_formulations_initial_a}.}

\begin{figure}[H]
    \centering
    \includegraphics[scale=0.36]{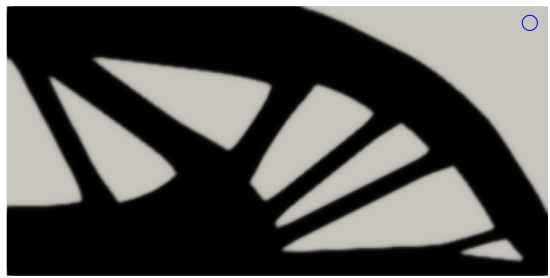}
    \caption{Standard reference 2D MBB beam design. Compliance $J_D=65.58\,\mathrm{Nm}$.}
    \label{fig:mbb_beam_problem}
\end{figure}

\begin{figure}[H]
\centering
%\hspace{0.5cm}
\begin{minipage}{.45\textwidth}
\begin{subfigure}[t]{0.9\textwidth}
  \centering
  \includegraphics[scale=0.36]{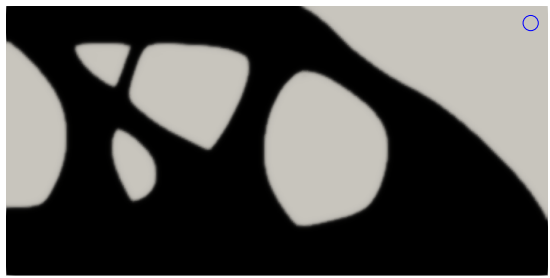}
  \captionof{figure}{Self-weight-based formulation with $w_0=0.10$. Compliance of $J_D=73.84\,\mathrm{Nm}$}
  \label{fig:mbb_beam_problem_selfweight}
\end{subfigure}
\begin{subfigure}[t]{0.9\textwidth}
  \centering
  \includegraphics[scale=0.36]{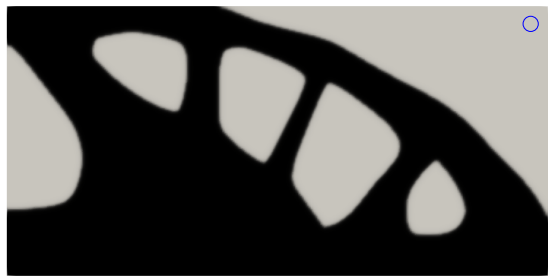}
  \captionof{figure}{Thermal conductivity-based formulation with $w_0=0.99$. Compliance $J_D=74.30\,\mathrm{Nm}$.}
  \label{fig:mbb_beam_problem_heat}
\end{subfigure}
\vspace{0.5cm}
\end{minipage}
\hspace{-0.5cm}
\begin{subfigure}{0.5\textwidth}
  \centering
  \includegraphics[scale=0.71]{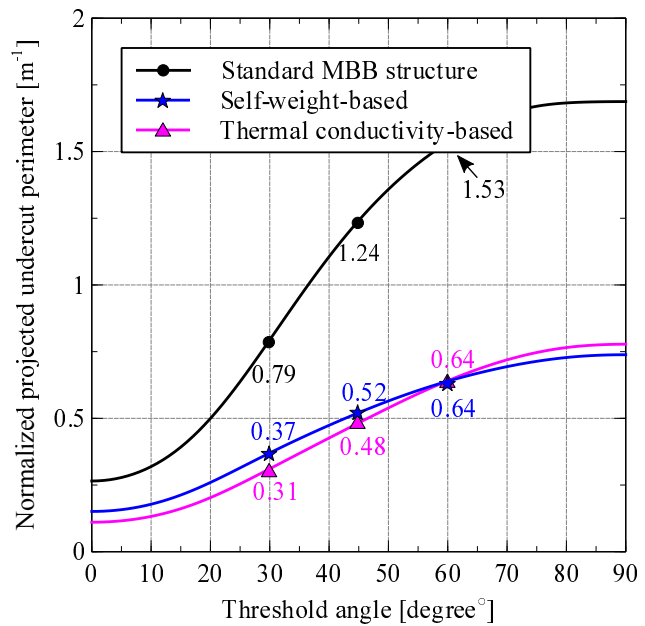}
  \captionof{figure}{NPUP measure with respect to the threshold angle for the standard MBB structure and those obtained via self-weight-based and thermal conductivity-based formulations.}
  \label{fig:2D_mbb_classical_angle_versus_PUP_formulations_initial_a}
\end{subfigure}
\caption{{\color{blue}Structures obtained using our proposed formulations for $160$ layers.}}
\label{fig:2D_mbb_classical_angle_versus_PUP_caption_initial}
\end{figure}

\subsection{3D beam}
\label{sec:3d_cantilever_beam}

We now consider a 3D version of the previous cantilever beam problem illustrated in Figure \ref{fig:3d_beam_geometry_boundary_conditions}.
As shown in Table \ref{table:3d_parameters_short_cantilever_beam_problem}, there are minor differences in the parameters that aim to deal with the coarse mesh and the computational cost of the 3D simulations.
The adopted target density generates the same volume of solid material as in the 2D case, enabling a fairer comparison.
Although possible, the symmetry around $y=3$ was not used in this study.
For illustrative purposes, the 3D results are obtained using contour, extract surface, and clip filters in the ParaView software \cite{ahrens2005paraview}.

\begin{figure}[H]
     \centering
    \begin{subfigure}[b]{0.45\textwidth}
         \centering
         \includegraphics[scale=1.00]{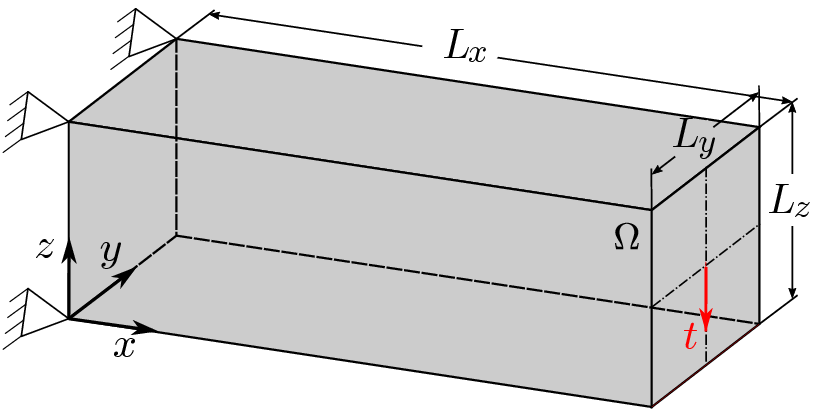}
         \caption{Main problem conditions.}
         \label{fig:3d_beam_geometry_boundary_conditions_main}
     \end{subfigure}
     \hspace{5pt}
     \begin{subfigure}[b]{0.45\textwidth}
         \centering
         \includegraphics[scale=1.00]{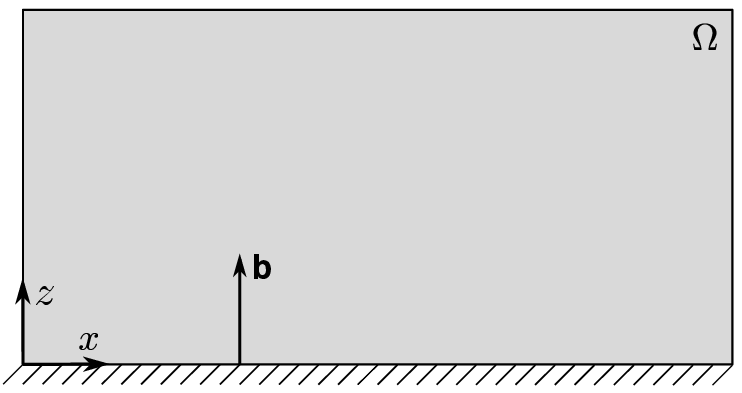}
         \caption{Sub-problems conditions.}
         \label{fig:s3d_beam_geometry_boundary_conditions_sub}
     \end{subfigure}
     \caption{3D beam problem: geometry, loading, boundary conditions, and density field initial condition.}
     \label{fig:3d_beam_geometry_boundary_conditions}
\end{figure}

\begin{table}[h!]
\centering
\begin{tabular}{p{4.2cm} p{2.2cm} p{1.7cm} | p{3.4cm} p{1.8cm} p{1.0cm}}
 \hline
 \hline
 Parameter & Symbol & Value & Parameter & Symbol & Value \\
 \hline
  Dimension [m] & $L_x \times L_y \times L_z$ & $12 \times 6 \times 6$ & 
    Threshold parameters & $\beta_{\min}$ - $\beta_{\max}$ & $1$ - $4$ \\
 Average density [Kg/m$^3$] & $\bar{v}$ & $0.08\overline{33}$ &
    & $\beta_{d}$ & $50$ \\
 Hexahedron elements & $N_x \times N_y \times N_z$ & $60 \times 30 \times 30$ &
    & $\eta$ & $0.5$ \\         
 \hline
 \hline
\end{tabular}
\caption{{\color{blue}Complementary parameters for the 3D beam problem.}}
\label{table:3d_parameters_short_cantilever_beam_problem}
\end{table}

Figure \ref{fig:Classical_perspective} illustrates the optimized design obtained via standard TO formulation \eqref{eq:standad_formulation_TO}.
Qualitatively similar to the corresponding 2D case (see Figure \ref{fig:parameters_short_cantilever_beam_problem_initialaa}), it is obtained by using {\color{blue} $\bar{r}=0.50\,\mathrm{m}$ resulting in $J_D=96.45\,\mathrm{Nm}$.}

\begin{figure}[H]
\centering
\includegraphics[scale=0.85]{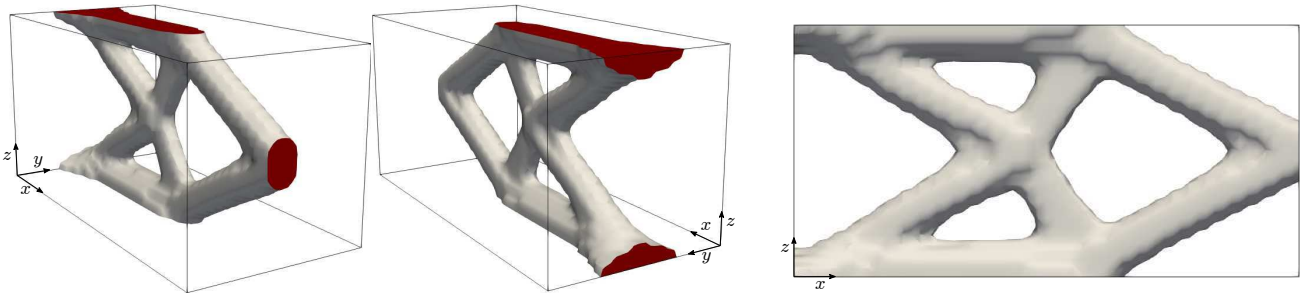}
\caption{{\color{blue}Standard reference structure. Design with $J_D=96.45\,\mathrm{Nm}$.}}
\label{fig:Classical_perspective}
\end{figure}

\begin{table}[h!]
\centering
\begin{tabular}{p{4cm}p{3cm}p{1.5cm}p{2cm}p{2cm}p{2cm}}
 \hline
 \hline
 Formulation & Filter radius [m] & $J_D$ [Nm] & $NP_{30^\circ}$ [m$^{-1}$] & $NP_{45^\circ}$ [m$^{-1}$] & $NP_{60^\circ}$ [m$^{-1}$] \\
 \hline
 \multirow{2}{*}{Standard} & $0.50$ & $96.45$ & $0.08$ & $0.16$ & $0.24$ \\
                            & $1.25$ & $235.13$ & $0.05$ & $0.11$ & $0.17$ \\
 \hline
 \multirow{2}{*}{PUP} & $0.50$ & $129.12$ & $0.01$ & $0.03$ & $0.11$ \\
                       & $1.25$ & $251.66$ & $0.01$ & $0.03$ & $0.09$ \\
 \hline
 \multirow{2}{*}{Self-weight} & $0.50$ & $126.88$  & $0.04$ & $0.07$ & $0.11$ \\
                              & $1.25$ & $303.03$ & $0.03$ & $0.06$ & $0.09$ \\
 \hline
 \multirow{2}{*}{Thermal conductivity} & $0.50$ & $127.83$ & $0.03$ & $0.05$ & $0.08$ \\
                                       & $1.25$ & $294.84$ & $0.02$ & $0.04$ & $0.08$ \\
 \hline
 \hline
\end{tabular}
\caption{{\color{blue}Quantitative values for the 3D cases.}}
\label{table:quantitative_values_3d}
\end{table}

Figure \ref{fig:perspective_I} shows designs obtained via PUP formulation (\ref{fig:Classical_perspective_45degree}), self-weight (\ref{fig:Complete_perspective_15Layers_smallradius}), and thermal conductivity-based (\ref{fig:Heat_perspective_15Layers_smallradius}) formulations {\color{blue} with $\bar{r}=0.50\,\mathrm{m}$}.
Quantitative values are given in Table \ref{table:quantitative_values_3d}.
The PUP formulation is employed by using the constraint upper limits of $P_{\bar{\alpha}}=2.0$ and $\bar{\epsilon}=0.5$.
It differs from the others in the topology (in the sense that there is material in a $V$ format along the $y$-axis) and due to the apparent dripping effect.
Showing {\color{blue}$J_D=129.12\,\mathrm{Nm}$} and relative small NPUP values, such design is unfeasible for AM point-of-view as the corresponding 2D case. 
The designs for the self-weight and thermal conductivity-based formulations are obtained by considering {\color{blue}$30$} layers and the weights of $w_0=0.10$ and {\color{blue}$w_0=0.25$} (similar to the corresponding 2D cases), presenting compliances of {\color{blue}$J_D=126.88\,\mathrm{Nm}$ and $J_D=127.83\,\mathrm{Nm}$}, respectively.
Designs obtained via $\mathbb{AP}$ formulation shows larger NPUP values but do not exhibit drippings.

\begin{figure}[H]
    \centering
    \begin{subfigure}[t]{1.0\textwidth}
      \centering
      \includegraphics[scale=0.85]{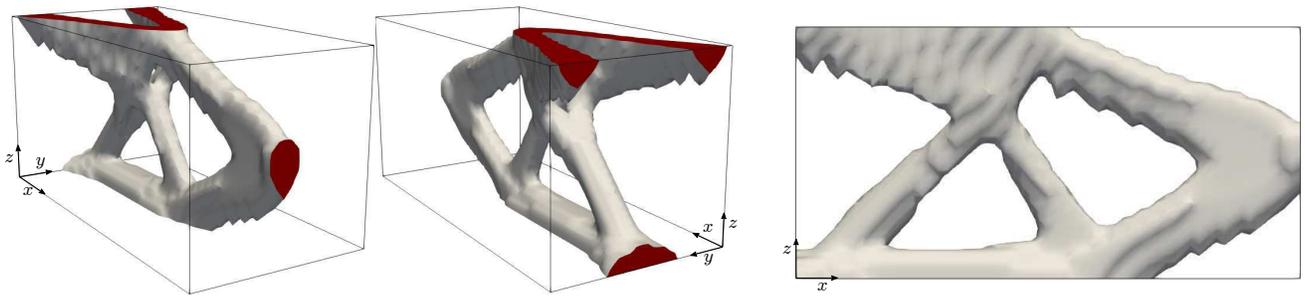}
      \captionof{figure}{PUP formulation. Design with $J_D=129.12\,\mathrm{Nm}$.}
      \label{fig:Classical_perspective_45degree}
    \end{subfigure}
    \begin{subfigure}[t]{1.0\textwidth}
      \centering
      \includegraphics[scale=0.85]{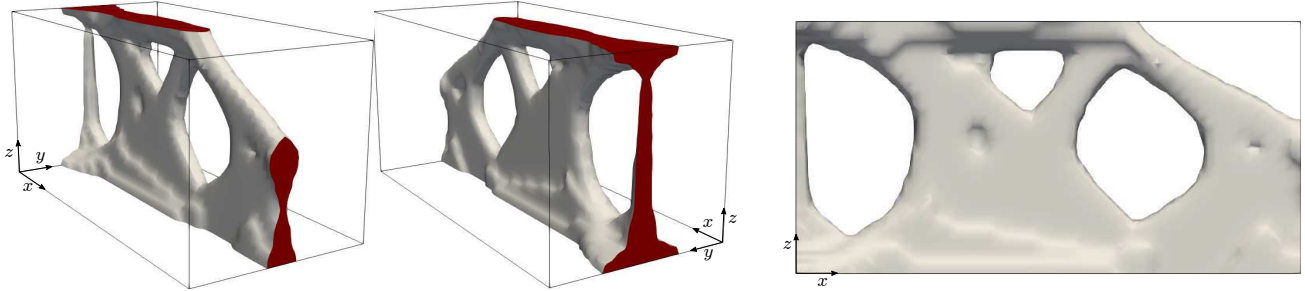}
      \captionof{figure}{Self-weight-based formulation for $30$ layers and $w_0=0.10$. Design with $J_D=126.88\,\mathrm{Nm}$.}
      \label{fig:Complete_perspective_15Layers_smallradius}
    \end{subfigure}
    \begin{subfigure}[t]{1.0\textwidth}
      \centering
      \includegraphics[scale=0.85]{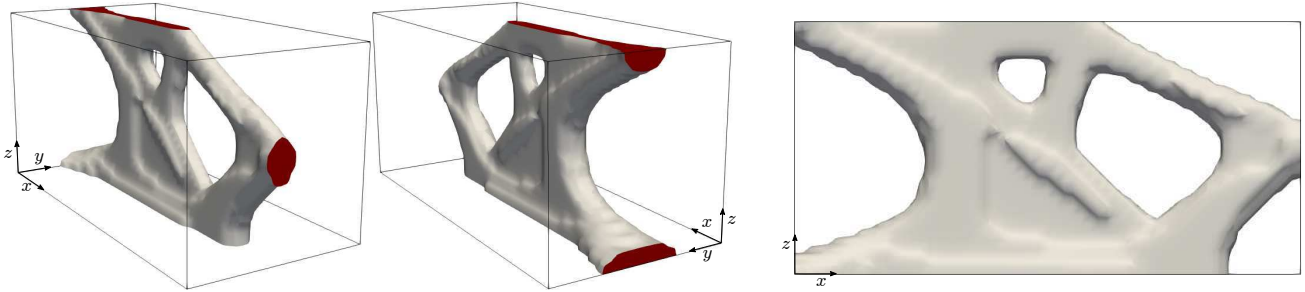}
      \captionof{figure}{Thermal conductivity-based formulation for 30 layers and $w_0=0.25$. Design with $J_D=127.83\,\mathrm{Nm}$.}
      \label{fig:Heat_perspective_15Layers_smallradius}
    \end{subfigure}
    \caption{{\color{blue}3D structures obtained via PUP and $\mathbb{AP}$ formulations.}}
    \label{fig:perspective_I}
\end{figure}

\section{Conclusion}
\label{sec:conclusion}

This study proposed in the form of problem ($\mathbb{AP}$) a general formulation of TO aimed at AM that accounts for the AM layer-by-layer building process by including a cost of partially built structures in the functional to be minimized. The formulation was specialized by taking it as being either the mechanical compliance, due to self-weight of the partially built structures, or the thermal compliance,
due to a heat source (the laser) applied on the top layer of the partially built structures. These costs lead to a limitation of the overhang relative to the build plate, and the numerical results show that the self-weight and the thermal conductivity-based formulations improve the additive manufacturability by reducing the normalized projected undercut perimeter of the reference structure at the cost of a small compliance increase. Both formulations worked well with a small filter radius without the appearance of dripping effects and sharp corners often encountered in geometric-based constraints. Optimizations, in both 2D and 3D, adopting different dimensions and boundary conditions show the formulation's generality.

The performed implementation using well-consolidated packages and the strategy of solving reduced systems (for each partial structure) allowed to carry out simulations in acceptable times. Parallelization can be added to improve this brand.

\section{Acknowledgment}

The authors  thankfully acknowledge financial support from the S\~ao Paulo Research Foundation (FAPESP) under grants CEPID-CeMEAI 2013/07375-0 and 2020/14288-0, and from the National Council for Scientific and Technological Development - CNPq under grants 304192/2019-8 and 304074/2020-9.

\bibliographystyle{plainnat}
\bibliography{bibliography}

\appendix

\section{Proof of Theorem 1}
\label{sec:app_theorem1}
\begin{proof} Let $(\hat{\rho}(\rho_{n_{k}}))$ be an arbitrary subsequence. From the coercivity of the bi-linear form, Cauchy-Schwartz and the fact that $\rho \leq 1$ a.e.\ we deduce that $||\hat{\rho}(\rho_{n_{k}})||_{1,\Omega} \leq |\Omega|/\min\{1,r^2\}$ for all $n_{k}$. We can thus extract a subsequence such that the filtered designs converge weakly in $H^{1}(\Omega)$, and thus also in $L^{2}(\Omega)$, to a solution $\hat{\rho} = \hat{\rho}(\rho)$ to \eqref{eq:variational_formulation_FEM}. Appealing to the Rellich-Kontrachev theorem \cite[Theorem 9.16]{brezis2011functional} we may extract a further subsequence converging strongly in $L^{2}(\Omega)$ to some $\overline{\rho}$. Since a strong limit is a weak limit and the latter is unique it follows that $\overline{\rho} = \hat{\rho}(\rho)$, i.e.\ we have obtained a subsequence of designs such that the filtered designs converge strongly in $L^{2}(\Omega)$ to $\hat{\rho}(\rho)$. According to Theorem 4.9 in \cite{brezis2011functional} we may then extract a further subsequence converging point-wise a.e.\ to $\hat{\rho}(\rho)$. 
Since \eqref{eq:variational_formulation_FEM} has a unique solution and the original subsequence $(\rho_{n_{k}})$ was arbitrary it follows that the entire sequence $(\hat{\rho}(\rho_{n}))$ converges pointwise a.e.\ to $\hat{\rho}(\rho)$. 
\end{proof}

\section{Verification of the abstract theorem assumptions for the self-weight-based formulation }
\label{sec:app_a}

This appendix aims to verify the boundedness and continuity assumptions of the abstract Theorem \ref{teo:convergence} when $J_{P}$ is particularized for the self-weight-based formulation described in Section \ref{sec:self_weight_based_formulation}. We consider the special case treated numerically in Section \ref{sec:num_results} where the design domain is a rectangular box (in 2D or 3D) and the build-direction $\bm{b} = \bm{e}_{d}$. Furthermore, Theorem \ref{thm:stateobjcont} below is proved for a strictly positive lower bound $t_{0}>0$ in the time integration rather than $0$. In numerical computations one must of course use a strictly positive lower bound anyway, so this limitation has no practical consequence. We believe it is possible, but leave it for the moment, to treat the case $t_{0}=0$ theoretically as well, possibly borrowing ideas from the theory of shells where one considers domains shrinking to zero in one direction. Looking at \eqref{eq:boundedness} below it seems that what is needed is an appropriate dependence on the domain height of the Korn's constant and the constant in the extension theorem.

By using the nested form (recall Remark \ref{remark:nested_form}) the cost of the partially built structures becomes
\begin{equation}\label{eq:objective}
    J_P \left( {\left. \rho \right|}_{\Omega_t}, \bm{u} \left({\left. \rho \right|}_{\Omega_t}, t \right), t \right) = 
    L_t \left( \rho; \bm{u} \left( \rho \right) \right)
    \quad
    \textmd{where}
    \quad
    L_{t} \left( \rho; \bm{v} \right) 
    = \int_{\Omega_t} 
    \bm{f}\left( {\left. \rho \right|}_{\Omega_t} \right) \cdot \bm{v} \ud x
    \quad \textmd{for} \quad t \in (0, T].
\end{equation}

The boundedness-assumption on $J_{P}$ is verified using the Cauchy-Schwartz inequality and the fact that $\bm{f}(\rho) \in [L^{2}(\Omega)]^d$:
\begin{equation}\label{eq:Lboundedness}
    \left| L_t \left( \rho; \bm{u} \left( \rho \right) \right) \right| \leq 
    {\left\| \bm{f}\left( {\left. \rho \right|}_{\Omega_t} \right) \right\|}_{0,\Omega_{t}}\,
    {\left\| \bm{u} \left( {\left. \rho \right|}_{\Omega_t} \right) \right\|}_{0,\Omega_{t}} \leq
    {\left\| \bm{f} \left( \rho \right) \right\|}_{0,\Omega}\,
    {\left\| \bm{u} \left( {\left. \rho \right|}_{\Omega_t} \right) \right\|}_{1,\Omega_{t}} =
    c{\left\| \bm{u} \left( {\left. \rho \right|}_{\Omega_t} \right) \right\|}_{1,\Omega_{t}}.
\end{equation}
Then Theorem \ref{thm:stateobjcont} below asserts the existence of weakly convergent subsequences of partially built states and the continuity of $J_{P}$. To prove the theorem we start with two basic results for fixed domains and then proceed to handle the varying domains borrowing techniques from the shape optimization literature \cite{haslinger1996finite}. 

\begin{lemma}\label{lem:Lcontfixeddomain} Let $\bm{f}(\bar{\rho}\left( \rho \right)) = -g_{p} \bar{\rho} \bm{b}$, with the constant $g_{p} < \infty$. Then for a fixed $t \in (0,T]$ and a sequence ($\rho_{n},\bm{v}_{n}$) in $\mathcal{D} \times V_t$ converging weakly$^*$ $\times$ weakly to $(\rho,\bm{v}) \in \mathcal{D} \times V_t$, $L_t \left( \rho_n; \bm{v}_n \right)$ tends to $L_t \left( \rho; \bm{v} \right)$.
\end{lemma}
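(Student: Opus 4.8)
The statement is a joint weak$^*$/weak continuity result for the bilinear-in-$(\rho,\bm v)$ functional $L_t(\rho;\bm v)=\int_{\Omega_t}\bm f(\bar\rho(\rho))\cdot\bm v\,\mathrm dx = -g_p\int_{\Omega_t}\bar\rho(\rho)\,\bm b\cdot\bm v\,\mathrm dx$ on a \emph{fixed} domain $\Omega_t$. The natural approach is to split the difference $L_t(\rho_n;\bm v_n)-L_t(\rho;\bm v)$ as
\begin{equation*}
 L_t(\rho_n;\bm v_n)-L_t(\rho;\bm v)
 = \underbrace{L_t(\rho_n;\bm v_n)-L_t(\rho;\bm v_n)}_{\text{(I)}}
 + \underbrace{L_t(\rho;\bm v_n)-L_t(\rho;\bm v)}_{\text{(II)}},
\end{equation*}
and to show each term vanishes. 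Term (II) is immediate: for fixed $\rho$, $\bm v\mapsto L_t(\rho;\bm v)=-g_p\int_{\Omega_t}\bar\rho(\rho)\,\bm b\cdot\bm v\,\mathrm dx$ is a bounded linear functional on $V_t\subset[H^1(\Omega_t)]^d$ (since $\bar\rho(\rho)\in[0,1]$ a.e., so $g_p\bar\rho(\rho)\bm b\in[L^2(\Omega_t)]^d$), hence weakly continuous, so $\bm v_n\rightharpoonup\bm v$ gives (II)$\to0$.

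Term (I) is where the real content lies. Here I would write (I)$=-g_p\int_{\Omega_t}\bigl(\bar\rho(\rho_n)-\bar\rho(\rho)\bigr)\,\bm b\cdot\bm v_n\,\mathrm dx$. The key facts to assemble are: first, by Theorem~\ref{thm:pointwisefilter}, since $\rho_n\overset{\ast}{\rightharpoonup}\rho$ in $L^\infty(\Omega,[0,1])$ the filtered fields $\hat\rho(\rho_n)\to\hat\rho(\rho)$ pointwise a.e., and since the threshold map $\bar\rho(\cdot)$ is continuous, $\bar\rho(\rho_n)\to\bar\rho(\rho)$ pointwise a.e.\ as well; moreover $|\bar\rho(\rho_n)-\bar\rho(\rho)|\le1$ a.e., so by the dominated convergence theorem $\bar\rho(\rho_n)\to\bar\rho(\rho)$ strongly in $L^2(\Omega_t)$ (indeed in every $L^p(\Omega_t)$, $p<\infty$). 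Second, $\bm v_n\rightharpoonup\bm v$ in $[H^1(\Omega_t)]^d$ implies $(\bm v_n)$ is bounded in $[H^1(\Omega_t)]^d$ and, by Rellich–Kondrachov, precompact in $[L^2(\Omega_t)]^d$; in particular $(\bm b\cdot\bm v_n)$ is bounded in $L^2(\Omega_t)$. Then (I) is a pairing of something going to zero strongly in $L^2$ against something bounded in $L^2$:
\begin{equation*}
 |\text{(I)}| \le g_p\,\|\bar\rho(\rho_n)-\bar\rho(\rho)\|_{0,\Omega_t}\,\|\bm v_n\|_{0,\Omega_t}
 \le g_p\,\|\bar\rho(\rho_n)-\bar\rho(\rho)\|_{0,\Omega_t}\,\|\bm v_n\|_{1,\Omega_t} \longrightarrow 0 .
\end{equation*}
Combining, $L_t(\rho_n;\bm v_n)\to L_t(\rho;\bm v)$.

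The main obstacle, such as it is, is making sure the density-side convergence is genuinely \emph{strong} in a topology that pairs against the merely-weakly-convergent (hence only bounded) test fields; weak$^*$ convergence of $\rho_n$ alone would not suffice for a product of two weakly convergent sequences, so the passage through Theorem~\ref{thm:pointwisefilter} plus dominated convergence to upgrade to $L^2$-strong convergence of $\bar\rho(\rho_n)$ is the crux. One should also note in passing that it is enough to establish the conclusion along an arbitrary subsequence — extract a further subsequence realizing the a.e.\ pointwise (hence $L^2$-strong) convergence of $\bar\rho(\rho_n)$, conclude, and invoke uniqueness of the limit to get convergence of the full sequence. Everything else (boundedness of $\bm v_n$, the trace/embedding estimates) is standard for the fixed Lipschitz domain $\Omega_t$, and no uniform-in-$t$ constants are needed here since $t$ is fixed; those enter only in the subsequent domain-varying result.
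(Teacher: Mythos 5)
Your proposal is correct and follows essentially the same route as the paper's proof: the same splitting into $L_t(\rho_n;\bm v_n)-L_t(\rho;\bm v_n)$ plus $L_t(\rho;\bm v_n)-L_t(\rho;\bm v)$, with Cauchy--Schwarz, the a.e.\ convergence of $\bar\rho(\rho_n)$ from Theorem~\ref{thm:pointwisefilter} plus dominated convergence, boundedness of the weakly convergent $(\bm v_n)$, and weak continuity of the fixed linear functional. The only cosmetic differences are that your appeal to Rellich--Kondrachov and the closing subsequence remark are unnecessary, since Theorem~\ref{thm:pointwisefilter} already gives a.e.\ convergence of the full sequence and $L^2$-boundedness of $(\bm v_n)$ suffices.
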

\begin{proof} Adding and subtracting terms and then using Cauchy-Schwartz gives
\begin{multline}\label{eq:Lcontfixed}
\left| L_t \left( \rho_n; \bm{v}_n \right)-L_t \left( \rho; \bm{v} \right) \right| = \left| \int_{\Omega_{t}} -g_{p} \bar{\rho}(\rho) \bm{b} \cdot \bm{v} \ud x-L_t \left( \rho; \bm{v} \right) \right| = \\
\left| \int_{\Omega_{t}} -g_{p} [\bar{\rho}(\rho_n)-\bar{\rho}(\rho)]  \bm{b} \cdot \bm{v}_{n} \ud x +  \int_{\Omega_{t}} -g_{p} \bar{\rho}(\rho) \bm{b} \cdot \bm{v}_{n} \ud x -L_t \left( \rho; \bm{v} \right) \right| \leq \\
||\bm{b}\cdot \bm{v}_{n}||_{0,\Omega_{t}}\left(\int_{\Omega_{t}} (g_{p} [\bar{\rho}(\rho_n)-\bar{\rho}(\rho)])^2 \ud x\right)^{1/2} + 
\left| L_t \left( \rho; \bm{v}_{n} \right) -L_t \left( \rho; \bm{v} \right) \right|.
\end{multline}
Since $g_{p}^2 \geq (g_{p} [\bar{\rho}(\rho_n)-\bar{\rho}(\rho)])^2 \rightarrow 0$ a.e.,
the first term in the last line tends to zero by the boundeness of the sequence $(\bm{v}_{n})$ and Lebesgue's dominated convergence theorem. The second term in the last line of \eqref{eq:Lcontfixed} tends to zero due to the weak convergence of $(\bm{v}_{n})$ to $\bm{v}$.  
\end{proof}

Using similar techniques as in the proof of Lemma \ref{lem:Lcontfixeddomain} one obtains the following continuity result for the bi-linear form in the sub-problems on a fixed domain (c.f. \cite{petersson1998slope}) :

\begin{lemma}\label{lem:acontfixeddomain} Let $a_{\Omega_{t}}(\rho; \bm{u},\bm{v})$ denote the bi-linear form defined in \eqref{eq:bilinear_linear_form_subproblems_self_weight}. Then for a fixed $t \in (0,T]$ and a sequence ($\rho_{n},\bm{u}_{n}$) in $\mathcal{D}$, $a_{\Omega_{t}}(\rho_{n}; \bm{u}_{n},\bm{v}) \rightarrow a_{\Omega_{t}}(\rho; \bm{u},\bm{v})$ for all $\bm{v} \in V_{t}$.
\end{lemma}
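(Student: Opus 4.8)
The plan is to prove Lemma~\ref{lem:acontfixeddomain} by mirroring the argument used for Lemma~\ref{lem:Lcontfixeddomain}, exploiting that the domain $\Omega_t$ is now \emph{fixed} (so all functions live in the same space $V_t$ and no extension/domain-varying machinery is needed) and that the only source of nonlinearity in $\rho$ is the RAMP interpolation ${E}^{\mathrm{RAMP}}({\left.\rho\right|}_{\Omega_t})$, which is a bounded continuous function of $\bar\rho$. Recalling the factorization $\mathbb{D}(\lambda,\mu) = E(\rho)\,\mathbb{D}_0(\nu)$, the bilinear form is
\begin{equation*}
a_{\Omega_t}(\rho;\bm{u},\bm{v}) = \int_{\Omega_t} {E}^{\mathrm{RAMP}}\big({\left.\rho\right|}_{\Omega_t}\big)\,\bm{\varepsilon}(\bm{u}):\mathbb{D}_0(\nu):\bm{\varepsilon}(\bm{v})\ud x .
\end{equation*}

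\textbf{Step 1: reduce to a difference of two terms.} For the sequence $(\rho_n,\bm{u}_n)$ converging weakly$^*\times$weakly to $(\rho,\bm{u})$ in $\mathcal{D}\times V_t$, fix $\bm{v}\in V_t$ and add and subtract the intermediate term $a_{\Omega_t}$ with the coefficient evaluated at $\rho$ but the displacement at $\bm{u}_n$:
\begin{multline*}
\left| a_{\Omega_t}(\rho_n;\bm{u}_n,\bm{v}) - a_{\Omega_t}(\rho;\bm{u},\bm{v}) \right| \le
\left| \int_{\Omega_t} \big[{E}^{\mathrm{RAMP}}(\bar\rho_n)-{E}^{\mathrm{RAMP}}(\bar\rho)\big]\,\bm{\varepsilon}(\bm{u}_n):\mathbb{D}_0:\bm{\varepsilon}(\bm{v})\ud x \right| \\
+ \left| a_{\Omega_t}(\rho;\bm{u}_n,\bm{v}) - a_{\Omega_t}(\rho;\bm{u},\bm{v}) \right|,
\end{multline*}
where $\bar\rho_n = \bar\rho(\rho_n)$ and $\bar\rho = \bar\rho(\rho)$.

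\textbf{Step 2: the coefficient term.} By Theorem~\ref{thm:pointwisefilter} the filtered fields $\hat\rho(\rho_n)$ converge pointwise a.e.\ to $\hat\rho(\rho)$; since the threshold map $\bar\rho(\cdot)$ is continuous, $\bar\rho_n \to \bar\rho$ pointwise a.e., and since ${E}^{\mathrm{RAMP}}$ is continuous, ${E}^{\mathrm{RAMP}}(\bar\rho_n) \to {E}^{\mathrm{RAMP}}(\bar\rho)$ pointwise a.e. Moreover ${E}^{\mathrm{RAMP}}$ is uniformly bounded on $[0,1]$ (by $E_0$), so $|{E}^{\mathrm{RAMP}}(\bar\rho_n)-{E}^{\mathrm{RAMP}}(\bar\rho)|$ is dominated by the constant $2E_0$. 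The tensor contraction $\bm{\varepsilon}(\bm{u}_n):\mathbb{D}_0:\bm{\varepsilon}(\bm{v})$ is bounded in $L^1(\Omega_t)$ uniformly in $n$ because $(\bm{u}_n)$, being weakly convergent, is bounded in $V_t$ (use Cauchy--Schwarz and the boundedness of $\mathbb{D}_0$). A slightly cleaner route: apply Cauchy--Schwarz to split off $\|\bm{\varepsilon}(\bm{v})\|_{0,\Omega_t}$ and then argue that $[{E}^{\mathrm{RAMP}}(\bar\rho_n)-{E}^{\mathrm{RAMP}}(\bar\rho)]\,\bm{\varepsilon}(\bm{u}_n) \to 0$ in $[L^2(\Omega_t)]^{d\times d}$ by dominated convergence — the integrand is dominated by $(2E_0)^2|\bm{\varepsilon}(\bm{u}_n)|^2$, and one needs the $\bm{u}_n$ to actually converge strongly in $[L^2]^d$ or at least to have equi-integrable gradients; safer is to test against the fixed $\bm{v}$ first, writing the term as $\int_{\Omega_t}[{E}^{\mathrm{RAMP}}(\bar\rho_n)-{E}^{\mathrm{RAMP}}(\bar\rho)]\,g_n\ud x$ with $g_n := \bm{\varepsilon}(\bm{u}_n):\mathbb{D}_0:\bm{\varepsilon}(\bm{v})$ bounded in $L^1$, and then invoking that a bounded-in-$L^\infty$ sequence converging a.e.\ to $0$ times an $L^1$-bounded sequence need not vanish — so the Cauchy--Schwarz split is in fact the right move: bound by $\|{E}^{\mathrm{RAMP}}(\bar\rho_n)-{E}^{\mathrm{RAMP}}(\bar\rho)\|_{0,\Omega_t}\,\|\bm{\varepsilon}(\bm{u}_n)\|_{0,\Omega_t}\,\|\mathbb{D}_0\|\,$, wait — that does not work either because $\bm{v}$ was absorbed wrongly. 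The clean statement: bound the term by $\|\bm{\varepsilon}(\bm{v})\|_{0,\Omega_t}\cdot\|\,[{E}^{\mathrm{RAMP}}(\bar\rho_n)-{E}^{\mathrm{RAMP}}(\bar\rho)]\,\mathbb{D}_0:\bm{\varepsilon}(\bm{u}_n)\|_{0,\Omega_t}$, and the second factor tends to $0$ by Lebesgue dominated convergence \emph{provided} $(\bm{\varepsilon}(\bm{u}_n))$ converges strongly in $L^2$; since it only converges weakly, the correct fix is to instead keep $\bm{v}$ fixed inside and use that $(\bar\rho_n)$ converges a.e.\ \emph{and} boundedly, so by dominated convergence ${E}^{\mathrm{RAMP}}(\bar\rho_n)\to {E}^{\mathrm{RAMP}}(\bar\rho)$ strongly in $L^p(\Omega_t)$ for every $p<\infty$; taking $p=2$, and noting $\bm{\varepsilon}(\bm{u}_n)$ bounded in $L^2$ and $\bm{\varepsilon}(\bm{v})$ fixed in $L^2$, Hölder with exponents $(\infty$ absorbed$)$… concretely: write the term as $\int_{\Omega_t} h_n\, \bm{\varepsilon}(\bm{u}_n):\mathbb{D}_0:\bm{\varepsilon}(\bm v)\,\ud x$ with $h_n:= {E}^{\mathrm{RAMP}}(\bar\rho_n)-{E}^{\mathrm{RAMP}}(\bar\rho)\to 0$ in $L^2$ and a.e., $|h_n|\le 2E_0$. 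Then $h_n\bm{\varepsilon}(\bm v)\to 0$ strongly in $L^2$ by dominated convergence, while $\mathbb{D}_0:\bm{\varepsilon}(\bm u_n)$ is bounded in $L^2$; hence the pairing tends to $0$ (strong $\times$ bounded).

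\textbf{Step 3: the displacement term.} The map $\bm{w}\mapsto a_{\Omega_t}(\rho;\bm{w},\bm{v})$ is a fixed bounded linear functional on $V_t$ (bounded because ${E}^{\mathrm{RAMP}}(\bar\rho)\le E_0$ and $\mathbb{D}_0$, $\bm{v}$ are fixed), so $a_{\Omega_t}(\rho;\bm{u}_n,\bm{v})\to a_{\Omega_t}(\rho;\bm{u},\bm{v})$ follows immediately from $\bm{u}_n\rightharpoonup \bm{u}$ in $V_t$. Combining Steps 2 and 3 gives the claim.

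\textbf{Expected main obstacle.} The delicate point — and the one I would state carefully — is Step~2: one must not merely use pointwise convergence of the coefficients against a merely weakly convergent displacement sequence, since ``a.e.\ convergent bounded coefficient $\times$ weakly convergent gradient'' is \emph{not} enough to pass to the limit. The resolution (used implicitly in Lemma~\ref{lem:Lcontfixeddomain} via its $\|\bm{b}\cdot\bm v_n\|_{0,\Omega_t}$ bound) is to upgrade the coefficient convergence to \emph{strong} $L^2$ convergence via the dominated convergence theorem — legitimate because the coefficients are uniformly bounded — and then pair that strong convergence against the bounded-in-$L^2$ strain sequence. With the test function $\bm{v}$ held fixed this is clean; the only arithmetic is Hölder's inequality and checking the domination $|{E}^{\mathrm{RAMP}}(\cdot)|\le E_0$ on $[0,1]$, which is routine.
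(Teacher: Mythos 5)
Your argument is correct and is precisely the fleshing-out of the one-line remark the paper offers in place of a proof (``using similar techniques as in the proof of Lemma~\ref{lem:Lcontfixeddomain}''): split the difference into a coefficient-variation term and a state-variation term, kill the former by pointwise-a.e.\ convergence of the filtered/projected density plus dominated convergence, and the latter by weak convergence of $\bm{u}_n$ tested against a fixed bounded linear functional. The one genuine adaptation needed beyond Lemma~\ref{lem:Lcontfixeddomain} --- grouping the vanishing coefficient $E^{\mathrm{RAMP}}(\bar\rho_n)-E^{\mathrm{RAMP}}(\bar\rho)$ with the \emph{fixed} factor $\bm{\varepsilon}(\bm{v})$ so that dominated convergence gives strong $L^2$ convergence, which is then paired with the merely bounded sequence $\bm{\varepsilon}(\bm{u}_n)$ --- is exactly the point your Step~2 eventually lands on; the final version of that step is sound, but the several false starts preceding it should be excised from any written version.
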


We are now ready to prove

\begin{theorem}\label{thm:stateobjcont} Let $t \in [t_{0},T]$, $t_{0}>0$, be fixed and consider a sequence $(t_{i},\rho_{i})$ in $[t_{0},T] \times \mathcal{D}$ converging strongly $\times$ weakly$^*$ to some $(t,\rho) \in (0,T] \times \mathcal{D}$. Then there is a subsequence such that the corresponding states $\bm{u}_{i} = \bm{u}_{i}(t_{i},\rho_{i})$ converges to a limit  $\bm{u} \in V_{t}$ satisfying $a_{t} \left( \rho; \bm{u}, \bm{v} \right) = L_{t} \left( \rho; \bm{v} \right)$ for all $\bm{v} \in V_{t}$ and such that $L_{t_i}(\rho_{i},\bm{u}_{i}) \rightarrow L_{t}(\rho,\bm{u})$.
\end{theorem}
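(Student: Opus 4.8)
The plan is to argue by compactness, following the moving-domain technique of \cite{haslinger1996finite}, in four steps: a uniform-in-$i$ a~priori bound on the partial states $\bm{u}_i$; extraction of a weakly convergent subsequence of their extensions to $\Omega$; passage to the limit in the sub-problem variational identity to identify the limit as $\bm{u}=\bm{u}(t,\rho)$; and convergence of the cost values $L_{t_i}(\rho_i,\bm{u}_i)$.

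First I would test the $i$-th sub-problem with $\bm{v}=\bm{u}_i$, giving $a_{t_i}(\rho_i;\bm{u}_i,\bm{u}_i)=L_{t_i}(\rho_i;\bm{u}_i)$. Since $E^{\mathrm{RAMP}}(\bar\rho)\ge E_{\min}>0$, the left-hand side dominates $c_0 E_{\min}\|\bm{\varepsilon}(\bm{u}_i)\|_{0,\Omega_{t_i}}^2$, and Korn's inequality on $\Omega_{t_i}$ — available because the homogeneous Dirichlet condition is imposed on $\Gamma_0$ with $|\Gamma_0|>0$ — yields coercivity $a_{t_i}(\rho_i;\bm{u}_i,\bm{u}_i)\ge\alpha\|\bm{u}_i\|_{1,\Omega_{t_i}}^2$. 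The crucial point is that $\alpha$ can be taken independent of $i$: since $t_i\in[t_0,T]$ with $t_0>0$, each $\Omega_{t_i}$ is a box with fixed base and height in the compact interval $[Ht_0/T,H]$, so rescaling the build direction onto a reference box shows that Korn's constant, and (below) the norm of a reflection extension operator, depend continuously — hence boundedly — on the height. Together with $|L_{t_i}(\rho_i;\bm{u}_i)|\le\|\bm{f}(\bar\rho_i)\|_{0,\Omega}\,\|\bm{u}_i\|_{1,\Omega_{t_i}}\le g_p|\Omega|^{1/2}\|\bm{u}_i\|_{1,\Omega_{t_i}}$ — using $\bar\rho_i\le 1$ a.e., cf.\ \eqref{eq:Lboundedness} — this gives $\|\bm{u}_i\|_{1,\Omega_{t_i}}\le C$ with $C$ independent of $i$; in particular the uniform-in-$\Omega_i$ bound required in Theorem~\ref{teo:convergence} holds.

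Next I would extend each $\bm{u}_i$ to $\tilde{\bm{u}}_i\in[H^1(\Omega)]^d$ by reflection across the top face of $\Omega_{t_i}$; by the previous step $(\tilde{\bm{u}}_i)$ is bounded, so along a subsequence $\tilde{\bm{u}}_i\rightharpoonup\bm{w}$ weakly in $[H^1(\Omega)]^d$ and, by Rellich--Kondrachov, strongly in $[L^2(\Omega)]^d$. Put $\bm{u}:=\bm{w}|_{\Omega_t}$; since the extension leaves functions unchanged near $\Gamma_0$, each $\tilde{\bm{u}}_i$, hence $\bm{w}$ by weak continuity of the trace, vanishes on $\Gamma_0$, so $\bm{u}\in V_t$ and $\bm{u}_i\rightharpoonup\bm{u}$ in the sense of the Definition. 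To identify $\bm{u}$, I would fix $\bm{v}\in V_t$, extend it to $\tilde{\bm{v}}\in[H^1(\Omega)]^d$ with $\tilde{\bm{v}}=\bm{0}$ on $\Gamma_0$, restrict to $\bm{v}_i:=\tilde{\bm{v}}|_{\Omega_{t_i}}\in V_{t_i}$, and rewrite $a_{t_i}(\rho_i;\bm{u}_i,\bm{v}_i)=L_{t_i}(\rho_i;\bm{v}_i)$ as integrals over $\Omega$ weighted by $\chi_{\Omega_{t_i}}$. Then $\chi_{\Omega_{t_i}}\to\chi_{\Omega_t}$ in $L^2(\Omega)$, while $\bar\rho_i\to\bar\rho$ pointwise a.e.\ by Theorem~\ref{thm:pointwisefilter} and continuity of the threshold projection, so (dominated convergence, $E^{\mathrm{RAMP}}$ bounded) $\chi_{\Omega_{t_i}}E^{\mathrm{RAMP}}(\bar\rho_i)\,\mathbb{D}_0:\bm{\varepsilon}(\tilde{\bm{v}})$ converges strongly in $L^2$ and pairs with the weakly convergent $\bm{\varepsilon}(\tilde{\bm{u}}_i)$ to give $a_{t_i}(\rho_i;\bm{u}_i,\bm{v}_i)\to a_{t}(\rho;\bm{u},\bm{v})$; similarly $\chi_{\Omega_{t_i}}\bm{f}(\bar\rho_i)=-g_p\chi_{\Omega_{t_i}}\bar\rho_i\bm{b}\to-g_p\chi_{\Omega_t}\bar\rho\bm{b}$ in $L^2$, so $L_{t_i}(\rho_i;\bm{v}_i)\to L_t(\rho;\bm{v})$. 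Hence $a_t(\rho;\bm{u},\bm{v})=L_t(\rho;\bm{v})$ for every $\bm{v}\in V_t$, i.e.\ $\bm{u}=\bm{u}(t,\rho)$ by Lax--Milgram; since the limit is unique no further subsequence is needed. Finally, writing $L_{t_i}(\rho_i;\bm{u}_i)=\int_\Omega\chi_{\Omega_{t_i}}\bm{f}(\bar\rho_i)\cdot\tilde{\bm{u}}_i\ud x$ and again combining the strong $L^2$-convergence of $\chi_{\Omega_{t_i}}\bm{f}(\bar\rho_i)$ with the weak $L^2$-convergence of $\tilde{\bm{u}}_i$ yields $L_{t_i}(\rho_i;\bm{u}_i)\to L_t(\rho;\bm{u})$, which by \eqref{eq:objective} is the asserted convergence of the partial-structure costs.

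The main obstacle is the uniformity claimed in the second paragraph: one must show that Korn's inequality and the reflection extension operator have constants bounded uniformly over the family $\{\Omega_t\}_{t\in[t_0,T]}$. This is exactly where the restriction $t_0>0$ is used — the elementary box geometry making the scaling argument routine — whereas the degenerate case $t_0=0$ would require tracking how these constants blow up as the height tends to zero, as the authors note in connection with shell theory.
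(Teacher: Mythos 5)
Your argument is correct, and the first half (uniform coercivity, uniform extension, extraction of a weak $H^1(\Omega)$ limit of the extended states, identification of the limit's trace on $\Gamma_0$) coincides with the paper's proof. Where you genuinely diverge is in the passage to the limit in the state equation: the paper splits $\Omega_{i}$ into a fixed ``big'' box $G_{m}\subset\Omega_t\cap\Omega_i$ plus two thin slices $\Omega_{i}\setminus\Omega_{t}$ and $(\Omega_{t}\setminus G_{m})\cap\Omega_{i}$, controls the slices by the uniform a priori bound, and invokes two auxiliary fixed-domain continuity lemmas (Lemmas \ref{lem:Lcontfixeddomain} and \ref{lem:acontfixeddomain}) on $G_{m}$; you instead rewrite $a_{t_i}$ and $L_{t_i}$ as integrals over the fixed domain $\Omega$ weighted by $\chi_{\Omega_{t_i}}$, and conclude by dominated convergence (using $\chi_{\Omega_{t_i}}\to\chi_{\Omega_t}$ a.e.\ and $\bar\rho_i\to\bar\rho$ a.e.\ from Theorem \ref{thm:pointwisefilter}) followed by the standard strong-$L^2$-times-weak-$L^2$ pairing. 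Your route is shorter, dispenses with both lemmas, and treats the bilinear form, the test-function linear form, and the objective $L_{t_i}(\rho_i;\bm{u}_i)$ (where both arguments vary) by one uniform mechanism; the paper's decomposition is closer to the general shape-optimization toolbox of \cite{haslinger1996finite} and would survive situations where the coefficients cannot be so cleanly extended by zero or where $|\Omega_{i}\,\triangle\,\Omega_{t}|\to 0$ is not available. The one place you should tighten is the extension operator: a single reflection across the top face of $\Omega_{t_i}$ covers $\Omega$ only when $2Ht_i/T\geq H$, so for small $t_i$ you need finitely many iterated reflections (at most $\lceil T/t_{0}\rceil$, hence a uniformly bounded operator norm) or simply the uniform extension property for the family of boxes, which is what the paper invokes; either way this is exactly where $t_{0}>0$ enters, as you correctly observe.
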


\begin{proof} The states $\bm{u}_{i} \in V_{i}$ satisfies
\begin{equation}
    a_{i} \left( \rho_{i}; \bm{u}_{i}, \bm{v}_{i} \right) = 
    L_{i} \left(\rho_{i}; \bm{v}_{i} \right), \quad \forall \bm{v}_{i} \in V_{i}.
\label{eq:discrstate1}
\end{equation}
Being bounded with Lipschitz boundary, the domains $\Omega_{i} \equiv \Omega_{t_{i}}$ possess the uniform extension property. We therefore have a Korn's inequality \cite{hlavavcek1989inequalities} with constant depending not on the particular domains, but only on properties shared by all of them. It follows that $a_{i} \left( \rho_{i}; \cdot, \cdot \right)$ is coercive over $V_{i}$ with constant not depending on $\Omega_{i}$. Using \eqref{eq:Lboundedness} then gives
\begin{equation}
    \gamma {\left\| \bm{u}_{i} \right\|}_{1,\Omega_{t_{i}}}^2 \leq
    a_{\Omega_{t_{i}}}(\rho_{i}; \bm{u}_{i}, \bm{u}_{i} ) \leq
    c {\left\| \bm{u}_{i} \right\|}_{1,\Omega_{t_{i}}} \Rightarrow 
    {\left\| \bm{u}_{i} \right\|}_{1,\Omega_{t_{i}}} \leq c/\gamma.
\label{eq:boundedness}
\end{equation} 
The uniform extension property implies that $\bm{u}_{i}$ can be extended to a function $\tilde{\bm{u}}_{i} \in [H^{1}(\Omega)]^d$ such that ${\left\| \tilde{\bm{u}}_{i} \right\|}_{1,\Omega} \leq c_{2} {\left\| \bm{u}_{i} \right\|}_{1,\Omega_{i}}$ with constant $c_{2}$ independent of $i$, whence it follows that ${\left\| \tilde{\bm{u}}_{i} \right\|}_{1, \Omega} \leq c_{2} c/\gamma$. The sequence $\tilde{\bm{u}}_{i}$ will thus have a weakly convergent subsequence converging to some $\tilde{\bm{u}} \in [H^{1}(\Omega)]^d$. Since $\bm{u}_{i} = \bm{0}$ on $\Gamma_{0}$ and the trace operator is weakly continuous it follows that $\bm{u}$, the restriction of $\tilde{\bm{u}}$ to $\Omega_{t}$, satisfies $\bm{u} = \bm{0}$ on $\Gamma_{0}$ and thus that  $\bm{u} \in V_{t}$.

Before proceeding we note that the test-space in \eqref{eq:discrstate1} can be taken as the fixed space $V_{0} = \{ \bm{v} \in [H^{1}(\Omega)]^{d} \;|\; \bm{v} = \bm{0} \in \Gamma_{0} \}$; clearly $\bm{v} \in V_{0}$ implies $\bm{v}|_{\Omega_{i}} \in V_{i}$ and, due to the uniform extension property, every $\bm{v} \in V_{i}$ can be extended to a function in $V_{0}$. Introducing $V_{0}$ is not necessary but simplifies the arguments below.

To handle the fact that the domain varies as $i$ tends to infinity in \eqref{eq:discrstate1} we split $\Omega_{i}$ into three parts (see Fig.~\ref{fig:setdivision}) according to
\begin{equation}\nonumber
\Omega_{i} = G_{m} \cup (\Omega_{i} \setminus \Omega_{t}) \cup ((\Omega_{t} \setminus G_{m}) \cap \Omega_{i}).
\end{equation}
With integer $m\geq \lceil T/Ht \rceil$, $G_{m} = \{ \bm{x} \in \Omega \,|\, 0 < x_{d} < Ht/T - 1/m \}$ will be a subset of $\Omega_{t}$, and for all sufficiently large $i$:s it will thus also be a subset of $\Omega_{i}$ (specifically, $\exists N$ such that $i>N \Rightarrow |t-t_{i}| < T/mH$). 
\begin{figure}[H]
\centering
\includegraphics[scale=0.85]{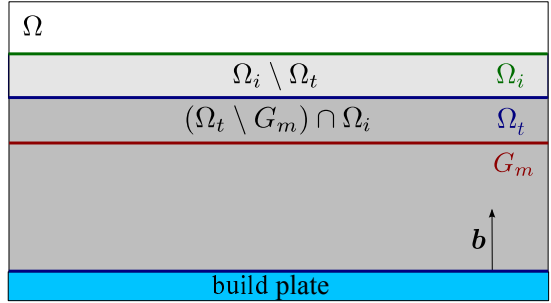}
\caption{Division of $\Omega_{i}$ into a ``big'', fixed part $G_{m}$ and two ``thin'' slices depending on $i$.}
\label{fig:setdivision}
\end{figure}

Now letting $\bm{v} \in V_{0}$ be arbitrary and noting that for a given domain $\omega \subset \Omega$ and design $\rho \in \mathcal{D}$, there is a constant $c$ such that $|a_{\omega}(\rho; \bm{u},\bm{v})| \leq c||\bm{u}||_{1,\omega}||\bm{v}||_{1,\omega}$,
we find that
\begin{multline}\nonumber
|a_{\Omega_{t}}(\rho;\bm{u},\bm{v})-a_{\Omega_{t_{i}}}(\rho_{i};\bm{u}_{i},\bm{v})| \leq 
|a_{\Omega_{t}}(\rho;\bm{u},\bm{v})-a_{\Omega_{G_{m}}}(\rho_{i};\bm{u}_{i},\bm{v})| + |a_{\Omega_{i} \setminus \Omega_{t}}(\rho_{i};\bm{u}_{i},\bm{v})| + |a_{(\Omega_{t} \setminus G_{m}) \cap \Omega_{i}}(\rho_{i};\bm{u}_{i},\bm{v})| \leq \\
|a_{\Omega_{t}}(\rho;\bm{u},\bm{v})-a_{\Omega_{G_{m}}}(\rho_{i};\bm{u}_{i},\bm{v})| + c||\bm{u}_{i}||_{1,\Omega_{i} \setminus \Omega_{t}}||\bm{v}||_{1,\Omega_{i} \setminus \Omega_{t}} + 
c||\bm{u}_{i}||_{1,(\Omega_{t} \setminus G_{m}) \cap \Omega_{i}}||\bm{v}||_{1,(\Omega_{t} \setminus G_{m}) \cap \Omega_{i}} \leq \\
|a_{\Omega_{t}}(\rho;\bm{u},\bm{v})-a_{\Omega_{G_{m}}}(\rho_{i};\bm{u}_{i},\bm{v})| + c||\bm{u}_{i}||_{1,\Omega_{i}}||\bm{v}||_{1,\Omega_{i} \setminus \Omega_{t}} + 
c||\bm{u}_{i}||_{1,\Omega_{i}}||\bm{v}||_{1,\Omega_{t} \setminus G_{m}}.
\end{multline}
Using \eqref{eq:boundedness} on the last two terms then gives
\begin{multline}\nonumber
|a_{\Omega_{t}}(\rho;\bm{u},\bm{v})-a_{\Omega_{t_{i}}}(\rho_{i};\bm{u}_{i},\bm{v})| \leq |a_{\Omega_{t}}(\rho;\bm{u},\bm{v})| + c_{1}||\bm{v}||_{1,\Omega_{i} \setminus \Omega_{t}} + 
c_{2}||\bm{v}||_{1,\Omega_{t} \setminus G_{m}} \leq \\
|a_{\Omega_{t}}(\rho;\bm{u},\bm{v})-a_{\Omega_{G_{m}}}(\rho;\bm{u},\bm{v})| + |a_{\Omega_{G_{m}}}(\rho;\bm{u},\bm{v}) - a_{\Omega_{G_{m}}}(\rho_{i};\bm{u}_{i},\bm{v})| + c_{1}||\bm{v}||_{1,\Omega_{i} \setminus \Omega_{t}} + 
c_{2}||\bm{v}||_{1,\Omega_{t} \setminus G_{m}},
\end{multline}
where the first term in the second line converges to zero since $\Omega_{G_{m}}$ converges to $\Omega_{t}$ and $\rho$ and $\bm{u}$ {\color{blue}are fixed; the second tends to zero using} Lemma \ref{lem:acontfixeddomain}; and the last terms because $|\Omega_{i} \setminus \Omega_{t}| \rightarrow 0$ and $|\Omega_{t} \setminus G_{m}| \rightarrow 0$.

The right-hand side in \eqref{eq:discrstate1} appears also in the objective [recall \eqref{eq:objective}], so we need to consider a case where also the second argument varies. More precisely, for the same sequence of $\rho_{i}$:s as before and a sequence $\bm{v}_{i} \in V_{i}$ converging weakly to some $\bm{v} \in V_{t}$, 
\begin{multline}\nonumber
|L_{\Omega_{t}}(\rho;\bm{v})-L_{\Omega_{t_{i}}}(\rho_{i};\bm{v}_{i})| = 
|L_{\Omega_{t}}(\rho;\bm{v})-L_{G_{m}}(\rho_{i};\bm{v}_{i})| +
|L_{\Omega_{i} \setminus \Omega_{t}}(\rho_{i};\bm{v}_{i})| +
|L_{(\Omega_{t} \setminus G_{m}) \cap \Omega_{i}}(\rho_{i};\bm{v}_{i})|  \leq \\
|L_{\Omega_{t}}(\rho;\bm{v})-L_{G_{m}}(\rho_{i};\bm{v}_{i})| + ||\bm{f}||_{0,\Omega_{i} \setminus \Omega_{t}}||\bm{v}_{i}||_{0,\Omega_{i} \setminus \Omega_{t}} + ||\bm{f}||_{0,\Omega_{t} \setminus G_{m}}||\bm{v}_{i}||_{0,(\Omega_{t} \setminus G_{m}) \cap \Omega_{i}}  \leq \\
|L_{\Omega_{t}}(\rho;\bm{v})-L_{G_{m}}(\rho;\bm{v})| + |L_{G_{m}}(\rho;\bm{v}) - L_{G_{m}}(\rho_{i};\bm{v}_{i})| + ||\bm{f}||_{0,\Omega_{i} \setminus \Omega_{t}}||\bm{v}_{i}||_{0,\Omega_{i} \setminus \Omega_{t}} + ||\bm{f}||_{0,\Omega_{t} \setminus G_{m}}||\bm{v}_{i}||_{0,(\Omega_{t} \setminus G_{m}) \cap \Omega_{i}},
\end{multline}
where the first term in the last line tends to zero since $\Omega_{G_{m}}$ converges to $\Omega_{t}$ and $\rho$ and $\bm{v}$ are fixed; the second to zero using Lemma \ref{lem:Lcontfixeddomain}; and the last terms tends to zero since $\bm{f}$ is fixed and $(\bm{v}_{i})$ is a convergent, hence bounded sequence.

Having shown that the left and right-hand sides in \eqref{eq:discrstate1} converge, for every $\bm{v} \in V_{t}$, to $a_{t}(\rho;\bm{u},\bm{v})$  and $L_{t}(\rho;\bm{v})$ respectively, it follows that the limiting state $\bm{u} \in V_{t}$ satisfies $a_{t} \left( \rho; \bm{u}, \bm{v} \right) = L_{t} \left( \rho; \bm{v} \right)$ for all $\bm{v} \in V_{t}$. 
\end{proof}

\end{document}